\numberwithin{equation}{section}
\setlist[enumerate,1]{label={\rm(\arabic*)}, ref={\rm\arabic*}} 
\newtheorem{theorem}{Theorem}[section]
\newtheorem{lemma}[theorem]{Lemma}
\newtheorem{claim}[theorem]{Claim}
\newtheorem{proposition}[theorem]{Proposition}
\newtheorem{corollary}[theorem]{Corollary}
\theoremstyle{definition}
\newtheorem{definition}[theorem]{Definition}
\theoremstyle{remark}
\newtheorem{remark}[theorem]{Remark}
\newtheorem{example}[theorem]{Example}
\newtheorem{question}[theorem]{Question}
\newtheorem{problem}[theorem]{Problem}
\def \Q{{\mathbb Q}}
\def \P{{\mathbb P}}
\def \Z{{\mathbb Z}}
\def \C{{\mathbb C}}
\def \R{{\mathbb R}}
\DeclareMathOperator{\codim}{codim}
\DeclareMathOperator{\Torsion}{Torsion}
\DeclareMathOperator{\prim}{prim}
\DeclareMathOperator{\SL}{SL}
\DeclareMathOperator{\Hg}{Hg}
\DeclareMathOperator{\Sp}{Sp}
\DeclareMathOperator{\Aut}{Aut}
\title{On algebraically coisotropic submanifolds of holomorphic symplectic manifolds}
\author{Ekaterina Amerik}
\address{Universit\'e Paris-Sud,  
Laboratoire de Math\'ematiques d'Orsay, 
91405 Orsay, France \\
and National Research University Higher School of Economics, 
Laboratory of Algebraic Geometry and its Applications, 
Usacheva 6, 119048 Moscow, Russia}
\email{ekaterina.amerik@math.u-psud.fr}
\author{Fr\'ed\'eric Campana}
\address{Universit\'e Lorraine, Institut \'Elie Cartan, 
54506 Vand{\oe}uvre-l\`es-Nancy, France}
\email{frederic.campana@univ-lorraine.fr}
\begin{document}


\maketitle

\begin{prelims}

\DisplayAbstractInEnglish

\bigskip

\DisplayKeyWords

\medskip

\DisplayMSCclass

\end{prelims}


\newpage

\setcounter{tocdepth}{1}

\tableofcontents



\section{Introduction and main results}

\subsection{Algebraically coisotropic submanifolds} 

Let $M$ be a complex projective manifold equipped with a holomorphic symplectic form $\sigma$. In particular, $M$ is of even dimension $2n$. Let $X$ be an irreducible complex submanifold of $M$. By linear algebra, at every $x\in X$, the corank of the restriction $\sigma|_X$ at $x$ is at most $c=\codim(X)$. Equality holds exactly when $T_{X,x}$ contains its $\sigma$-orthogonal.

\begin{definition}\label{coiso}
The submanifold  $X$ is \emph{coisotropic} if the corank of $\sigma|_X$ is equal to $c$ at each\footnote{This definition coincides with those of \cite{Saw} and \cite{Vo} for $X$ smooth. More generally, when $X$ is a possibly singular subvariety, one calls $X$ coisotropic when $T_{X,x}$ contains its $\sigma$-orthogonal for every smooth point $x\in X$.} point of $X$.  In this case, the kernel of $\sigma|_X$ defines a regular foliation\footnote{\textit{A priori} only a distribution, it is a foliation because $\sigma$ is $d$-closed. See \cite[Section~2.1]{Saw}.} $\mathcal{F}$ of rank $c$ on $X$, and the rank of $\sigma|_X$ is $2(n-c)$. The foliation $\mathcal{F}$ is called the \emph{characteristic foliation} of $X$.
\end{definition}

It follows from the definition that the codimension $c$ of a coisotropic subvariety $X$ is at most $n$, and if it is equal to $n$, then  the restriction of $\sigma$ to $X$ is zero. In this case, $X$ is called \emph{Lagrangian}.
Also recall  that a subvariety $Y$ is \emph{isotropic} if $\sigma|_Y=0$; the dimension of an isotropic subvariety is at most $n$, and to be Lagrangian is the same as to be isotropic and coisotropic at the same time.

In general, the set of the coisotropic submanifolds of $M$ depends on the choice of the form $\sigma$; when $\sigma$ is not specified, we say that a submanifold of $M$ is coisotropic when it is coisotropic for some $\sigma$ on $M$. One important and most studied case is when $M$ is \emph{irreducible holomorpic symplectic} (\emph{IHS}), or \emph{hyperk\"ahler}: that is, $M$ is simply connected and $h^{2,0}$ is $1$-dimensional.
In this case, obviously the notion of the coisotropy is independent of the form. When $M$ is a product of IHS manifolds, by the K\"unneth formula, any symplectic form is a product of forms lifted from the factors, and so the notion of coisotropy does not depend on the choices in the case when $X$ is itself a compatible product. 

A regular foliation on an algebraic manifold is said to be \emph{algebraic} (or \emph{algebraically integrable}) if its leaves are algebraic submanifolds. If the characteristic foliation on $X$ is algebraic, $X$ is said to be \emph{algebraically coisotropic}. In this case, there is a fibration $f\colon X\to B$ which has the leaves of $\mathcal{F}$ as fibres.
This fibration is ``quasi-smooth'' in the sense that all of its fibres have smooth reduction. We call it the \emph{characteristic fibration} of $X$.

\begin{example}
Any smooth hypersurface $D$ of $M$ is coisotropic since $\sigma|_D$ is of even rank and therefore degenerate.

Lagrangian subvarieties of $M$ are algebraically coisotropic: the characteristic foliation then has a single leaf, $X$ itself. In particular, curves on a holomorphic symplectic surface (either $K3$ or Abelian) 
are algebraically coisotropic.
\end{example}

\subsection{Divisors}

It is easy to see that a smooth uniruled hypersurface is algebraically coisotropic with rational curves as leaves. On the opposite side, Hwang and Viehweg \cite{HV} have shown that a smooth algebraically coisotropic hypersurface of general type is a curve in a holomorphically symplectic surface. Several important observations have been made in \cite{Saw}. In \cite{AC-div}, we have described the algebraically coisotropic divisors as follows.

\begin{theorem}\label{divisors}
A smooth algebraically coisotropic hypersurface $D$ in a holomorphic symplectic manifold $M$ is either uniruled or, up to a finite \'etale cover of\, $M$, of the  form $D=C\times Y\subset M=S\times Y$, where $Y$ is a holomorphic symplectic manifold, $S$ is a holomorphic symplectic surface and $C$ is a curve on $S$.
\end{theorem}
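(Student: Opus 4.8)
The plan is to analyse the characteristic fibration and to show that, in the non-uniruled case, it is isotrivial and in fact (after an \'etale cover) a trivial family of curves, and then to propagate this product structure from $D$ to $M$.

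First I would set up the basic geometry. Write $2n=\dim M$, let $\mathcal F$ be the rank-one characteristic foliation, and let $f\colon D\to B$ be the characteristic fibration provided by the algebraicity hypothesis, whose fibres are the leaves. Since $\sigma$ is closed and $\mathcal F=\ker(\sigma|_D)$, the form $\sigma|_D$ descends to a holomorphic symplectic form $\sigma_B$ on $B$ with $\sigma|_D=f^*\sigma_B$ (symplectic reduction, cf. \cite{Saw}); in particular $B$ is symplectic of dimension $2n-2$ with $K_B=\mathcal O_B$. The symplectic form identifies $\mathcal F\cong N^*_{D/M}=\mathcal O_D(-D)|_D$, so on a leaf $F$ of genus $g$ one gets $T_F\cong \mathcal O_F(-D)$ and hence $D\cdot F=2g-2$. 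If $g=0$ the leaves are rational and $D$ is uniruled; thus in the non-uniruled case $g\ge 1$. Adjunction together with $K_M=\mathcal O_M$ gives $K_D=N_{D/M}=K_{D/B}$, since $\mathcal F=T_{D/B}$.

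The heart of the argument, and the step I expect to be the main obstacle, is to prove that $f$ is isotrivial and then to trivialize it. For $g\ge 2$ the fibres are canonically polarized, and the moduli map $B\dashrightarrow \overline{\mathcal M_g}$ cannot vary because $B$ is symplectic with $\kappa(B)=0$: a non-isotrivial smooth family of canonically polarized curves would force positivity in $\Omega^1_B$ incompatible with $c_1(B)=0$, by rigidity of the Viehweg--Zuo / Kebekus--Kov\'acs type over bases of Kodaira dimension zero. For $g=1$ the fibration is a smooth elliptic scheme and $K_{D/B}=f^*\lambda$ for the Hodge line bundle $\lambda$; rigidity of polarized variations of Hodge structure over a base with trivial canonical bundle again forces the $j$-invariant to be constant, whence $\lambda$, and therefore $K_D$, is torsion. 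In either case $f$ becomes, after a finite \'etale base change together with a finite \'etale cover of $D$, a trivial product $D\cong C\times B$ with $f$ the second projection and $\sigma|_D=\mathrm{pr}_B^*\sigma_B$: when $g\ge 2$ this is immediate from the finiteness of $\Aut(F)$, while for $g=1$ one applies the Beauville--Bogomolov decomposition to a cover of $D$ (using that $K_D$ is torsion) and checks that the elliptic characteristic fibration splits off a torus factor. Correspondingly $D$ acquires a splitting $T_D=T_C\oplus f^*T_B$.

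Finally I would upgrade the product structure on $D$ to one on $M$. The slices $\{c\}\times B\subset D$ are holomorphic symplectic submanifolds of $M$ of codimension $2$, and the symplectic form lets me take their $\sigma$-orthogonal, producing along $D$ a rank-two $\sigma$-nondegenerate distribution complementary to the rank-$(2n-2)$ distribution $T_B$; $\sigma$-orthogonality makes the decomposition compatible with $\sigma$. I would then show that these two distributions are integrable and extend to a global $\sigma$-orthogonal splitting $T_M=\mathcal S\oplus\mathcal Y$ into symplectic subbundles of ranks $2$ and $2n-2$. Since $M$ is projective symplectic, hence of vanishing first Chern class, a holomorphic splitting of the tangent bundle into integrable subbundles that are parallel for the Ricci-flat metric yields, after a further finite \'etale cover, a decomposition $M\cong S\times Y$ by the Beauville--Bogomolov decomposition theorem; here $Y$ is a leaf of $\mathcal Y$, a symplectic manifold of dimension $2n-2$, $S$ a symplectic surface arising as a leaf of $\mathcal S$, and $C\subset S$ the corresponding characteristic leaf, so that $D=C\times Y$ as required. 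The delicate points are the construction and integrability of $\mathcal S$ in a neighbourhood of $D$ and the verification that the resulting splitting is parallel; this, together with the isotriviality input, is where the real work lies.
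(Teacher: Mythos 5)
Your first half is essentially the paper's own route. The identification $\mathcal F\cong N^*_{D/M}$, the dichotomy via $\deg T_F=-D\cdot F=2-2g$, the descent of $\sigma|_D$ to a symplectic form on the base, the identity $K_D=K_{D/B}$, and isotriviality of $f$ via hyperbolicity-type rigidity over a base of Kodaira dimension zero: this is exactly the content of Lemma~\ref{base}, Proposition~\ref{nomult} and Theorem~\ref{kappaf}, which the paper explicitly describes as the main step of the proof of Theorem~\ref{divisors} given in \cite{AC-div}. Two caveats on that step: the base $B$ is in general singular with quotient singularities and the family is only controlled over $B^0$, whose complement has codimension $\geq 2$, so the rigidity theorem must be applied in its log/orbifold (``special base'') form --- this is what Proposition~\ref{nomult}(2) together with \cite{AC-iso} or \cite{T21} accomplishes; and for $g=1$ your appeal to ``rigidity of polarized variations of Hodge structure over a base with trivial canonical bundle'' is not a standard theorem, although the conclusion is easy (the $j$-invariant is a holomorphic function on $B^0$, extends across the codimension-$2$ complement, hence is constant on the projective $B$), and it is in any case covered by the cited results.

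The genuine gap is in your last step, the passage from $D=C\times Y$ to $M=S\times Y$. You define $\mathcal S=(T_{\{c\}\times Y})^{\perp\sigma}$ only along $D$, and then assert that $\mathcal S$ and $T_Y$ ``extend to a global $\sigma$-orthogonal splitting $T_M=\mathcal S\oplus\mathcal Y$''. No mechanism is proposed, and none is automatic: a holomorphic subbundle of $T_M|_D$ need not extend to a subbundle of $T_M$ even on a neighbourhood of $D$, let alone globally on the compact $M$; moreover ``integrability'' of $\mathcal S$ does not even make sense along $D$, since $\mathcal S\not\subset T_D$, so it can only be discussed after the (unjustified) extension. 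Even granted a global integrable splitting, parallelism for the Ricci-flat metric is a further unproved claim (though here one could instead invoke the known splitting theorems for integrable direct-sum decompositions of $T_M$ when $c_1(M)=0$). This globalization is precisely the part of Theorem~\ref{divisors} that is \emph{not} contained in the isotriviality statement --- which is why the paper proves only Theorem~\ref{kappaf} here and defers the rest to \cite{AC-div} --- and any correct argument must bring in genuinely global input on $M$: for instance the Beauville--Bogomolov decomposition of $M$ itself combined with an analysis of how the foliated hypersurface sits inside it, or deformation theory of the symplectic slices $\{c\}\times Y$, whose normal bundle in $M$ is an extension of $\mathcal O_Y$ by $\mathcal O_Y$ by your product structure, so that one can try to move them out of $D$ and sweep out $M$. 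Note also that the theorem requires a finite \'etale cover of $M$, whereas your trivializations only produce covers of $B$ and $D$; relating the two is part of the same missing step.
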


\subsection{Higher codimension}

It is natural to ask whether a similar description is still valid for algebraically coisotropic subvarieties of higher codimension.


The analogues of curves in holomorphically symplectic surfaces are Lagrangian subvarieties.  The most optimistic higher-codimensional analogue of Theorem~\ref{divisors} would be a positive answer to the following.

\begin{question}\label{q1}
Let $X\subset M$ be a non-uniruled algebraically coisotropic submanifold in a projective holomorphic symplectic manifold $M$. Up to a finite \'etale cover, do we have $X=L\times Y$ and $M=N\times Y$, where $Y$ and $N$ are holomorphically symplectic and $L\subset N$ is Lagrangian in $N$?

In particular, is $X$ Lagrangian if $K_X$ is big? Is $X$ Lagrangian if $M$ is either a simple Abelian variety or an irreducible hyperk\"ahler manifold?
\end{question} 

\begin{remark}\label{just-coisotrop}
While every hypersurface is coisotropic, most  higher-codimensional submanifolds are not; in fact, coisotropy is a cohomological condition (see \cite[Lemma~1.4]{Vo}). Notice however that not all coisotropic submanifolds are of the type described in Question~\ref{q1}; \textit{e.g.}\ when $M$ is irreducible hyperk\"ahler, a complete intersection of two hypersurfaces with zero Beauville--Bogomolov intersection is coisotropic, essentially by definition of the Beauville--Bogomolov form. In particular, when $M$ has a Lagrangian fibration $f\colon M\to \P^n$, the inverse image of a codimension $2$ complete intersection (and, more generally, of any submanifold) $Y\subset \P^n$ is coisotropic. See Proposition~\ref{renat} for a result on its characteristic foliation.
\end{remark}

\begin{remark}\label{r1}
A positive answer to Question~\ref{q1} would reduce the description of algebraically coisotropic $X\subset M$ to that of the Lagrangian submanifolds of symplectic manifolds. Up to a finite covering, these are products $M=\Pi_iM_i$, where the $M_i$ are either simple Abelian varieties or IHS manifolds. However, the symplectic form does not need to be a product of forms on the factors if there is a non-trivial torus part (see \cite{AC-div}).
\end{remark}

The aim of this note is to prove several very partial results on these questions. Concerning Lagrangian submanifolds, we provide a brief survey on IHS manifolds in
Section~\ref{SIHS} and make some remarks on Abelian varieties in Section~\ref{SAV}. 

\begin{theorem}\label{kappaf}
Let $X$ be an  algebraically coisotropic submanifold of a holomorphic symplectic manifold $M$. Let $f\colon X\to B$ be its characteristic fibration. Suppose that $K_X$ is semiample. Then $f$ is isotrivial, and $\kappa(X)=\kappa(F)$, where $F$ is a smooth fibre of $f$.
\end{theorem}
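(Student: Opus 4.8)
The plan is to combine an adjunction computation along the fibres of the characteristic fibration with the fact that its base is symplectic, deduce isotriviality from the hyperbolicity/positivity results for moduli of good minimal models, and finally read off the Kodaira dimensions from additivity. First I would compute $K_X$ along a general (smooth) fibre $F=f^{-1}(b)$, so that $\dim F=c=\codim(X)$. Being a fibre of the smooth morphism $f$ near $b$, its normal bundle $N_{F/X}\cong f^{*}T_{B,b}$ is trivial, and adjunction gives $K_{F}\cong K_{X}|_{F}\otimes\det N_{F/X}\cong K_{X}|_{F}$. (Equivalently, the symplectic form identifies $N_{X/M}\cong T_{\mathcal F}^{\ast}$ and, since $K_{M}\cong\mathcal O_{M}$, yields $K_{X}\cong\det T_{\mathcal F}^{\ast}$, whose restriction to a leaf $F$ is $K_{F}$.) In particular $K_{F}$ is the restriction of the semiample bundle $K_{X}$, hence is itself semiample, so the general fibre $F$ is its own good minimal model.

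Next I would identify the base. By Definition~\ref{coiso} the characteristic foliation is $\mathcal F=\ker(\sigma|_{X})$, so $\sigma|_{X}$ is basic for $f$ and descends to a closed $2$-form $\sigma_{B}$ with $\sigma|_{X}=f^{*}\sigma_{B}$; since we have quotiented exactly by the kernel, $\sigma_{B}$ is non-degenerate and $(B,\sigma_{B})$ is a projective holomorphic symplectic manifold of dimension $2(n-c)$. In particular $K_{B}\cong\mathcal O_{B}$, so $\kappa(B)=0$, and $B$ is \emph{special} in the sense of Campana: up to a finite \'etale cover a projective symplectic manifold is a product of Abelian varieties and irreducible hyperk\"ahler manifolds, all of which are special.

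The crux is isotriviality. By the two previous steps, $f\colon X\to B$ is a fibration whose general fibre admits a good minimal model (indeed \emph{is} one), over a special base $B$. The isotriviality theorems for such families --- the base of a non-isotrivial family of good minimal models is of general type, hence over a special base the family is isotrivial (Viehweg--Zuo, Kebekus--Kov\'acs, Campana--P\u{a}un and Taji) --- then force $f$ to be isotrivial. This is the step where both hypotheses are essential: semiampleness of $K_{X}$ provides good minimal models on the fibres, and the symplectic hypothesis provides a special base. I expect this to be the main obstacle, both because it rests on the deep positivity results for moduli of canonically polarised (more generally, good minimal) varieties, and because one must verify that the leaf space $B$ is a genuine special variety --- controlling its possible singularities and, above all, the multiple fibres of the quasi-smooth fibration $f$, so that the relevant orbifold base $(B,\Delta)$ is still special.

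Finally, granting isotriviality, I would conclude by additivity. For an isotrivial fibration the Iitaka inequality is an equality: after a finite base change $B'\to B$ trivialising the family, $X\times_{B}B'$ is birational to $F\times B'$, whence $\kappa(X)=\kappa(F)+\kappa(B)$ (in the presence of multiple fibres one uses the orbifold refinement $\kappa(X)=\kappa(F)+\kappa(B,\Delta)$, which is where the specialness of $(B,\Delta)$ is again needed to keep the base contribution zero). Since $\kappa(B)=0$ by the second step, this gives $\kappa(X)=\kappa(F)$, as claimed.
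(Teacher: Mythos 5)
Your skeleton is the same as the paper's: restrict $K_X$ to the leaves (your adjunction step matches the paper's remark that $K_{\mathcal F}=K_X$), descend $\sigma$ to the leaf space to get a $\kappa=0$ special base, invoke the isotriviality theorems for families of good minimal models over special bases (the paper uses \cite[Theorem 5.1, Corollary 5.2]{AC-iso}, and \cite{T21} for Theorem~\ref{kappaf'}), then compare Kodaira dimensions. But there is a genuine gap exactly at the point you flag as ``the main obstacle'', and the patch you propose for it does not work. You assert that $\sigma|_X$ descends to all of $B$ and that $(B,\sigma_B)$ is a projective holomorphic symplectic \emph{manifold}. This is false in general: the characteristic fibration is only quasi-smooth, leaves can have finite holonomy, so $f$ can have multiple fibres and $B$ has quotient singularities; Sawon's descent (Lemma~\ref{base}) produces a symplectic form only on the open set $B^0$ lying under the non-multiple fibres. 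Your route to specialness via the Beauville--Bogomolov decomposition therefore has nothing to stand on. What the paper actually proves (Proposition~\ref{nomult}) is that there are \emph{no multiple fibres in codimension $1$} --- via the local coordinate computation of \cite[Lemma 2.3]{AC-div} applied to $\omega=\sigma^{n-c}$ --- so that $\eta^{n-c}$ trivializes $K_B$ off a codimension-$2$ set; then Freitag's lemma extends this form to a resolution $B'\to B$, giving canonical singularities, $\kappa(B')=0$, and $\kappa(B',K_{B'}+E)=0$, whence the pair $(B',E)$ is special. This proposition is not a technicality to be outsourced: it is the input that makes both the specialness and the final count work.

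Your last step leans on the missing proposition in an incorrect way. You write that orbifold additivity gives $\kappa(X)=\kappa(F)+\kappa(B,\Delta)$ and that ``specialness of $(B,\Delta)$ keeps the base contribution zero''. Specialness does not imply $\kappa=0$: special orbifolds can have $\kappa=-\infty$ (rationally connected ones) or any non-maximal positive $\kappa$ in higher dimension. If multiple fibres did occur in codimension $1$, then $\Delta\neq 0$, $K_B+\Delta$ would be a nonzero effective class (since $K_B$ is trivial where the form descends), and $\kappa(B,\Delta)$ could well be positive --- in which case neither the specialness needed for isotriviality nor the equality $\kappa(X)=\kappa(F)$ would follow from your argument. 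Moreover, the unqualified claim that for isotrivial fibrations ``the Iitaka inequality is an equality'' itself requires proof; only superadditivity is the standard direction. The paper avoids all of this by a direct computation: no codimension-$1$ multiple fibres plus $K_B$ trivial give $K_X=K_{X/B}$; after the finite base change $B'\to B$ trivializing the family, with $g\colon X'=F\times B'\to X$ and $h\colon X'\to F$ the projections, one has $h^*K_F=K_{X'/B'}=g^*K_{X/B}$, hence $\kappa(F)=\kappa(X,K_{X/B})=\kappa(X)$, with no appeal to additivity conjectures. To repair your proposal, you need to prove Proposition~\ref{nomult} (or an equivalent statement) and replace the additivity step by this relative-canonical-bundle computation.
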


The proof of Theorem~\ref{kappaf} is a direct adaptation of the one given in \cite{AC-div} as the main step of the proof of Theorem~\ref{divisors}. 

Using \cite{T21}, one can weaken the semiampleness condition as follows.

\begin{theorem}\label{kappaf'}
Let $X$ be an  algebraically coisotropic submanifold of a holomorphic symplectic manifold $M$. Let $f\colon X\to B$ be its characteristic fibration. Suppose that the smooth fibres of $f$ have good minimal models. Then $f$ is isotrivial, and $\kappa(X)=\kappa(F)$, where $F$ is a smooth fibre of $f$.
\end{theorem}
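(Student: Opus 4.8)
The plan is to reduce the statement to the \emph{isotriviality} of the characteristic fibration $f\colon X\to B$, exactly as in the hypersurface case of \cite{AC-div}, the new ingredient being an isotriviality criterion over ``special'' bases coming from \cite{T21}. The heart of the matter is a canonical-bundle identity forcing the base $B$ to have numerically trivial canonical class; once this is known, the good-minimal-model hypothesis on the fibres makes Taji's theorem applicable, and the computation of $\kappa(X)$ becomes formal.

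First I would record the adjunction computation that drives everything. Since $M$ is holomorphic symplectic, $K_M=\mathcal{O}_M$, so adjunction gives $K_X=\det N_{X/M}$. The symplectic form induces an isomorphism $T_M\cong\Omega_M$ carrying the $\sigma$-orthogonal $T_X^{\perp}$ to the conormal bundle; as $X$ is coisotropic, $T_X^{\perp}=\mathcal{F}$, whence $\mathcal{F}\cong N^{*}_{X/M}$ and therefore $\det\mathcal{F}\cong K_X^{-1}$. On the open locus where $f$ is a submersion one has $\mathcal{F}=T_{X/B}=\ker(df)$, so $\det\mathcal{F}\cong K_{X/B}^{-1}$; comparing with the previous line yields $f^{*}K_B\equiv 0$, i.e. $K_B$ is numerically trivial, together with $K_{X/B}=K_X$. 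In particular $K_X|_F=K_F$ for every smooth fibre $F$, which is what pins the Kodaira dimension of $X$ to that of $F$.

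The hard part is the isotriviality of $f$. Here I would argue as follows. A smooth projective variety with numerically trivial canonical bundle is \emph{special} in Campana's sense (after a finite \'etale cover it decomposes, by Beauville--Bogomolov, into abelian, Calabi--Yau and irreducible hyperk\"ahler factors, all of which are special). Restricting $f$ to its smooth locus, we obtain a smooth family whose fibres admit good minimal models by hypothesis, over a special base $B$. Taji's theorem \cite{T21} --- the resolution of Campana's isotriviality conjecture in the good-minimal-model case, to the effect that any positive variation of moduli would force positivity of the (orbifold) canonical class of the base --- then forces the variation of $f$ to vanish, so $f$ is isotrivial; since $f$ is quasi-smooth with fibres of smooth reduction, isotriviality propagates over all of $B$. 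I expect this step, and in particular the bookkeeping needed to see that the multiple-fibre and orbifold structure of $f$ does not spoil the speciality of the base, to be the main obstacle.

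Finally I would deduce the Kodaira dimension statement. Because $f$ is isotrivial, after a finite \'etale base change the family becomes a product $F\times B'$, and the easy additivity of Kodaira dimension for products gives $\kappa(F\times B')=\kappa(F)+\kappa(B')$. Since $K_B\equiv 0$ forces $\kappa(B')=0$, and Kodaira dimension is preserved under the finite \'etale covers involved, we conclude $\kappa(X)=\kappa(F)$, as claimed. The previously stated Theorem~\ref{kappaf} is then the special case in which $K_X$ is semiample: there $K_X|_F=K_F$ is semiample, so each smooth fibre is its own good minimal model and the good-minimal-model hypothesis is automatic.
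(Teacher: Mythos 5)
Your overall strategy coincides with the paper's: identify the canonical bundle of the characteristic foliation with $K_X$, show that the base of the characteristic fibration is special, invoke \cite[Theorem~1.1]{T21} to get isotriviality, then trivialize the family and compare Kodaira dimensions. (The paper's proof of Theorem~\ref{kappaf'} is literally ``repeat the proof of Theorem~\ref{kappaf}, replacing the isotriviality criterion of \cite{AC-iso} by Taji's theorem applied to the special base $B^0$'', with the supporting facts supplied by Lemma~\ref{base} and Proposition~\ref{nomult}.) Your identification $\mathcal{F}\cong N^*_{X/M}$ via the symplectic duality $T_M\cong\Omega_M$ is a correct and equivalent substitute for the paper's observation that $T_X/\mathcal{F}$ carries a non-degenerate $2$-form, so $K_{\mathcal{F}}=K_X$. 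But there is a genuine gap exactly at the step you yourself flag as ``the main obstacle'': you never establish that the base over which Taji's theorem is applied is special, and the shortcut you propose in its place does not work.

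Two concrete problems. First, your comparison $\det\mathcal{F}\cong K_{X/B}^{-1}$ is valid only over the locus where $f$ is a submersion, i.e.\ away from the multiple fibres; it yields triviality of $K_B$ restricted to $B^0$ only. To promote this to triviality of $K_B$ --- and, more to the point, to specialness of $B^0$ --- one must know that the multiple-fibre locus has codimension at least $2$ in $B$. This is not automatic for quasi-smooth (Seifert-type) fibrations; it is precisely the first assertion of Proposition~\ref{nomult}, proved by a local computation with the symplectic form (following \cite[Lemma~2.3]{AC-div}), which in turn uses Sawon's Lemma~\ref{base} that $\sigma|_X$ descends to a symplectic form $\eta$ on $B^0$. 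Second, even granting $K_B\equiv 0$, what \cite[Theorem~1.1]{T21} requires is specialness of the \emph{quasi-projective} base $B^0$, equivalently of the orbifold pair $(B',E)$ for a resolution $r\colon B'\to B$ with exceptional divisor $E$. Here $B$ is in general singular (quotient singularities), so the Beauville--Bogomolov decomposition for smooth projective varieties does not apply to it; and specialness of a projective model does not formally pass to an open subset --- removing a divisor can destroy specialness ($\P^1$ is special, $\P^1$ minus three points is not) --- so the codimension-$2$ statement is again indispensable. The paper closes this step by showing that $\eta^{n-c}$ trivializes $K_B$, that by Freitag's lemma \cite{F} this form extends to $B'$, so $B$ has canonical singularities and $\kappa(B')=0$, and finally that $K_{B'}+E$ is an effective divisor supported on $E$, whence $\kappa(B',K_{B'}+E)=0$ and $(B',E)$ is special. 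None of this is deferrable bookkeeping: it is exactly where the symplectic hypothesis enters, and without it the conclusion fails for general quasi-smooth fibrations with fibres having good minimal models. (A smaller issue: your last step assumes the trivializing cover $B'\to B$ is \'etale, which may fail; the paper instead uses $K_X=K_{X/B}$ together with invariance of Iitaka dimension under finite surjective pullback, which again rests on the absence of multiple fibres in codimension $1$.)
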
 

One may expect that the pseudo-effectivity of $K_X$ is sufficient for the conclusions of Theorem~\ref{kappaf'}, but new techniques are certainly required. See Remark~\ref{r2} for a related question.

\begin{corollary}\label{nefbig}
Let $M$ be a holomorphic symplectic manifold and $X\subset M$ an algebraically coisotropic submanifold. Assume that $K_X$ is nef and big, or more generally that $X$ is of general type. Then $X$ is Lagrangian.
\end{corollary}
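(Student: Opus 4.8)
The plan is to reduce everything to the two structural results above, since the essential content --- isotriviality of the characteristic fibration together with the equality $\kappa(X)=\kappa(F)$ --- is already encapsulated there. Write $2n=\dim M$, let $c=\codim(X)$, and let $f\colon X\to B$ be the characteristic fibration, whose general smooth fibre $F$ is a leaf of $\mathcal F$ and hence has $\dim F=c$. The key observation is that $X$ is Lagrangian precisely when $c=n$, equivalently when $\dim F=\dim X$, equivalently when $B$ is a point. So it suffices to prove $\dim F=\dim X$.

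First I would treat the case $K_X$ nef and big. Here $(a-1)K_X=aK_X-K_X$ is nef and big for $a\ge 2$, so the base-point-free theorem shows that $K_X$ is semiample; moreover $\kappa(X)=\dim X$ since $K_X$ is big. Theorem~\ref{kappaf} then applies and gives $\kappa(F)=\kappa(X)=\dim X$. Combined with the trivial bounds $\kappa(F)\le\dim F\le\dim X$, this forces $\dim F=\dim X$, hence $F=X$ and $B$ is a point, so $X$ is Lagrangian.

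For the more general assumption that $X$ is of general type, the point is to verify the hypothesis of Theorem~\ref{kappaf'}, namely that the smooth fibres $F$ admit good minimal models. I would deduce this from the elementary upper bound $\kappa(X)\le\kappa(F)+\dim B$, valid for any fibration (restricting pluricanonical forms to a general fibre, where $K_X|_F=K_F$ because the normal bundle of a smooth fibre is trivial). Since $\kappa(X)=\dim X=\dim F+\dim B$, this yields $\kappa(F)\ge\dim F$, so $F$ is itself of general type; and a smooth projective variety of general type has a good minimal model (a minimal model exists by the minimal model program, its canonical class being then nef and big and therefore semiample). With this hypothesis in hand, Theorem~\ref{kappaf'} gives $\kappa(F)=\kappa(X)=\dim X$, and exactly as before $\dim F=\dim X$, so $X$ is Lagrangian.

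The main obstacle lies entirely in verifying the hypotheses of the two theorems rather than in the final dimension count, which is formal. In the nef-and-big case this is just the base-point-free theorem; in the general-type case it is the implication ``$X$ of general type $\Rightarrow$ $F$ of general type $\Rightarrow$ $F$ has a good minimal model'', for which the elementary Iitaka-type inequality and the existence of good minimal models for varieties of general type are what one must invoke. Note that nef and big implies general type, so the first case is logically a special instance of the second; I keep it separate because it requires only the more elementary Theorem~\ref{kappaf} and not the input from \cite{T21} underlying Theorem~\ref{kappaf'}.
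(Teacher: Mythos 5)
Your proof is correct and takes essentially the same route as the paper: verify the hypotheses of Theorems~\ref{kappaf} and~\ref{kappaf'} (Kawamata base-point-freeness in the nef-and-big case, existence of good minimal models for varieties of general type in the other), then conclude by the dimension count $\kappa(F)=\kappa(X)=\dim X\le \dim F$, forcing $F=X$. The only cosmetic differences are that the paper applies base-point-freeness to $K_F=K_X|_F$ rather than to $K_X$ itself, and that it leaves implicit the easy-addition inequality you spell out to show that $F$ inherits general type from $X$.
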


Indeed, if $K_X$ is nef and big, then so is $K_F=K_X|_F$, and the Kawamata base-point-free theorem implies that $K_F$ is semiample. Hence $\kappa(X)=\kappa(F)$, and by the bigness of $K_X$, we must have $X=F$; that is, $X$ is Lagrangian.  If $X$ is of general type, so is $F$; in particular, it has a good minimal model, and we obtain the same conclusion from Theorem~\ref{kappaf'}.

Note that this is exactly the analogue of the Hwang--Viehweg theorem in higher codimension.

\begin{corollary}\label{absimple}
If\, $M$ is a simple Abelian variety and $X\subset M$ is algebraically coisotropic, then $X$ is Lagrangian.
\end{corollary}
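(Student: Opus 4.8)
The plan is to reduce the statement to the general-type case of Corollary~\ref{nefbig}, by showing that every proper smooth subvariety of a \emph{simple} Abelian variety is of general type. First I would dispose of the trivial possibility $X=M$ (which can only occur when $n=0$, since coisotropy would force $\codim(X)=2n$ to equal the corank $0$ of $\sigma|_X$), and assume henceforth that $X$ is a proper submanifold, so that $c=\codim(X)\ge 1$ and $\dim X=2n-c\ge n\ge 1$.

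The key input is the structure theorem for subvarieties of Abelian varieties (Ueno, Kawamata): if $X\subset M$ is a subvariety of an Abelian variety and $M_0\subset M$ denotes the identity component of the stabilizer $\{a\in M : a+X=X\}$, then $X$ is fibred over its image in $M/M_0$ with fibres the translates of $M_0$, the image is of general type, and $\kappa(X)=\dim X-\dim M_0$. I would apply this with $M$ simple: the only Abelian subvarieties of $M$ are $0$ and $M$ itself, and $M_0=M$ would force $X=M$, contrary to our assumption. Hence $M_0=0$, so $\kappa(X)=\dim X$; that is, $X$ is of general type. (As a parallel remark, one sees directly that $K_X$ is even semiample: $\Omega^1_X$ is a quotient of the trivial bundle $\Omega^1_M|_X$, hence globally generated, so $K_X=\det\Omega^1_X$ is globally generated.)

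With $X$ of general type, the conclusion is immediate from Corollary~\ref{nefbig}. For completeness I would recall the mechanism: since $X$ is of general type its smooth fibres $F$ admit good minimal models, so Theorem~\ref{kappaf'} (or, using the semiampleness noted above, Theorem~\ref{kappaf}) applies and yields $\kappa(X)=\kappa(F)$ for a smooth fibre $F$ of the characteristic fibration. Therefore $\dim X=\kappa(X)=\kappa(F)\le\dim F=c$, while $\dim X=2n-c$; comparing gives $2n-c\le c$, so $c\ge n$. Since $c\le n$ always holds for a coisotropic $X$, we obtain $c=n$, i.e.\ $X$ is Lagrangian.

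I do not anticipate a genuine obstacle, as the entire argument is packaged into the structure theorem for subvarieties of Abelian varieties together with Corollary~\ref{nefbig}. The one point to treat with care is the use of simplicity of $M$: it is exactly what forces the vanishing of the stabilizer $M_0$ and hence the general-type property of $X$. Without simplicity one could only conclude that $X$ is fibred in translates of a subtorus, which is precisely the product-type behaviour anticipated by Question~\ref{q1}.
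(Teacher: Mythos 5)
Your proof is correct and takes essentially the same route as the paper: Ueno's structure theorem for subvarieties of Abelian varieties, with simplicity of $M$ forcing the stabilizer subtorus (your $M_0$, the paper's $A$) to vanish, so that $X$ is of general type and Corollary~\ref{nefbig} applies; your extra observation that $K_X$ is globally generated, hence semiample, is a valid shortcut that lets Theorem~\ref{kappaf} apply directly, where the paper instead quotes Hartshorne for ampleness of $K_X$. The one slip is your parenthetical dismissal of $X=M$: in that case $\codim(X)=0$, not $2n$, and the corank of $\sigma|_X$ is also $0$, so $X=M$ is trivially algebraically coisotropic and is \emph{not} excluded by the coisotropy condition; as in the paper, this degenerate case must simply be excluded by reading ``submanifold'' as proper, since the statement itself fails for $X=M$ when $n\geq 1$.
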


Indeed, all positive-dimensional submanifolds in a simple Abelian variety have ample canonical bundle (see \cite[Proposition 4.1]{H}).

Examples of Lagrangian submanifolds in simple Abelian varieties of dimension $2n\geq 4$ seem difficult to construct.  O.~Debarre and C.~Voisin have informed us of one such construction for $n=2$, due to Schoen; 
see \cite{S, CMR}.  We do not know any higher-dimensional examples. They do not exist on sufficiently general (``Hodge-general'') Abelian varieties, by Corollary~\ref{chodge}.

The answer to Question~\ref{q1} is positive when $M$ is an Abelian variety.

\begin{theorem}\label{tab}
Let $M$ be an Abelian variety and $X\subset M$ an algebraically coisotropic submanifold. Then after a finite \'etale cover, there are subtori $D,N, C, P$ of\, $M$ such that $M=D\times C\times N\times P$ and a submanifold $Z\subset N$ such that 
\begin{enumerate}
\item $Z$ generates $N$;
\item $X=D\times C\times Z$;
\item the restriction $\sigma_N$ of\, $\sigma$ to $N$ is symplectic, and $Z$ is Lagrangian in $N$;
\item the characteristic fibration of\, $X$ is the projection $f\colon X\to D$ with fibres $C\times Z$, and the restriction $\sigma_D$ is symplectic;
\item $\dim(C)=\dim(P)$; in particular, if\, $X$ generates $M$, then $C=0$ and moreover $\sigma=\sigma_D\oplus\sigma_N$.
\end{enumerate}
Conversely, given the tori $D,C,N,P$, the symplectic forms $\sigma_N$ and $\sigma_D$, and a $\sigma_N$-Lagrangian submanifold of general type $Z$ generating $N$, then $X=D\times C\times Z$ is algebraically coisotropic in $D\times C\times N\times P$ for a suitable $\sigma$ which restricts as $\sigma_N$, resp.\ $\sigma_D$, to $N$, resp.\ $D$.
\end{theorem}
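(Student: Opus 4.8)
The plan is to combine Theorem~\ref{kappaf} with the structure theory of subvarieties of Abelian varieties (Ueno--Kawamata), thereby reducing the problem to pure symplectic linear algebra on $V:=T_0M$, on which $\sigma$ is a constant nondegenerate alternating form. First I would record that $K_X$ is semiample: since $T_M|_X\cong V\otimes\mathcal{O}_X$ is trivial, the normal bundle $N_{X/M}$ is globally generated, hence so is $K_X=\det N_{X/M}$. Thus Theorem~\ref{kappaf} applies and gives that the characteristic fibration $f\colon X\to B$ is isotrivial with $\kappa(X)=\kappa(F)$ for a smooth leaf $F$; note also that $X$, lying in an Abelian variety, is never uniruled. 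Each leaf $F$ is isotropic, because $T_F=(T_X)^{\perp_\sigma}\subseteq T_X$ is $\sigma$-isotropic. Applying Ueno's theorem to $F\subset M$, up to isogeny $F=C\times Z$ with $C=\mathrm{Stab}(F)^0$ a subtorus and $Z$ of general type with finite stabilizer; by Theorem~\ref{kappaf}, $\dim Z=\kappa(Z)=\kappa(F)=\kappa(X)$.

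Second, I would reduce to the case where $X$ generates $M$ and $\sigma$ is nondegenerate on the generated torus. Put $M_0=\langle X\rangle$ and $V_0=\operatorname{Lie}M_0=\sum_{x}T_xX$. The key identification is $\mathrm{rad}(\sigma|_{V_0})=\operatorname{Lie}C$: the radical equals $\bigcap_x(T_xX)^{\perp}\cap V_0=\bigcap_x\mathcal F_x$, the constant vector fields that are characteristic everywhere, and these are exactly the translations preserving every leaf, i.e.\ $\operatorname{Lie}C$, which is rational since $C$ is a genuine subtorus. Quotienting $M_0$ by $C$ therefore makes $\sigma$ nondegenerate, kills the torus part of the leaf, and leaves the coisotropic structure intact with leaf $Z$. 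The factor $P$ is produced by symplectic duality: as $\mathrm{rad}(\sigma|_{V_0})=\operatorname{Lie}C$ has dimension $\dim C$, nondegeneracy of $\sigma$ on $V$ forces a transverse $\dim C$-dimensional subspace pairing perfectly with $\operatorname{Lie}C$, whence $\dim P=\dim C$ and $M=D\times C\times N\times P$ up to isogeny by Poincaré reducibility.

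The heart of the argument is the essential case: $\bar X$ generating a nondegenerate $(\bar M_0,\sigma)$ with isotropic leaf $Z$ of general type and trivial stabilizer. Write $\bar g=\operatorname{Lie}\mathrm{Stab}(\bar X)^0$ and $V_N=\operatorname{Lie}\langle Z\rangle$. Since $\bar g\subseteq T_x\bar X=(T_z Z)^{\perp}$ while $T_zZ\subseteq \mathcal F_x$ spans $V_N$, one gets $V_N\subseteq \bar g^{\perp}$ immediately. The Lagrangian property is then forced by a dimension count rather than by any hard analysis of the Gauss map: Theorem~\ref{kappaf} gives $\kappa(\bar X)=\dim Z$, Ueno gives $\kappa(\bar X)=\dim\bar X-\dim\mathrm{Stab}(\bar X)^0$, and coisotropy gives $\dim\bar X=\dim\bar M_0-\dim Z$ (the foliation rank equals the codimension equals $\dim Z$); together these yield $\dim\bar g=\dim B$. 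As the orbit map $\mathrm{Stab}(\bar X)^0\to B$ has injective differential (its kernel $\bar g\cap\mathcal F_x$ is the infinitesimal leaf stabilizer, trivial here), it is an isogeny; hence $\bar g\oplus\mathcal F_x=T_x\bar X$ and $\bar g+V_N=V$. Combined with $V_N\subseteq\bar g^{\perp}$ this gives the orthogonal splitting $V=\bar g\oplus V_N$ with both summands symplectic, $\dim V_N=2\dim Z$, and therefore $Z$ Lagrangian in $N=\langle Z\rangle$. Reassembling with $C$ and $P$ yields the stated decomposition; when $X$ generates $M$ one has $P=0$, so $C=0$ and $\sigma=\sigma_D\oplus\sigma_N$.

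The main obstacle I anticipate is the two identifications on which the reduction rests — that $\mathrm{rad}(\sigma|_{\langle X\rangle})$ is precisely $\operatorname{Lie}C$ and that $\bar g\cap\mathcal F_x$ is the infinitesimal stabilizer of the leaf — together with the rationality bookkeeping needed to realize $D,C,N,P$ as actual subtori: one must play off the transcendental subspaces cut out by $\sigma$ against the rational subspaces $\operatorname{Lie}C=\operatorname{Lie}\mathrm{Stab}(F)^0$, $V_N=\operatorname{Lie}\langle Z\rangle$ and $\bar g=\operatorname{Lie}\mathrm{Stab}(\bar X)^0$, which turn out to coincide with them. The converse is then a direct verification: for $\sigma=\sigma_D\oplus\sigma_N\oplus\sigma_{CP}$ with $\sigma_{CP}$ any perfect pairing of $\operatorname{Lie}C$ with $\operatorname{Lie}P$, one computes $\ker(\sigma|_{X})=\operatorname{Lie}C\oplus T_zZ$ pointwise, so $X=D\times C\times Z$ has constant corank $\dim C+\dim Z=\codim X$ and characteristic leaves $C\times Z$, i.e.\ it is algebraically coisotropic with characteristic fibration the projection to $D$.
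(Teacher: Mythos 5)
Your proposal follows the paper's opening (semiampleness of $K_X$, Theorem~\ref{kappaf}, Ueno, Poincar\'e reducibility) but its symplectic core is genuinely different, and in its strongest part correct and arguably cleaner: instead of the paper's route --- split $M=D\times C\times N\times P$ first, decompose $\sigma$ by K\"unneth, and run two coisotropy dimension counts --- you work intrinsically with $\bar g=\operatorname{Lie}\mathrm{Stab}(\bar X)^0$ and $V_N=\operatorname{Lie}\langle Z\rangle$. Your dimension count $\dim\bar g=\dim B$ (Ueno's formula $\kappa=\dim-\dim\mathrm{Stab}^0$ against $\kappa(\bar X)=\dim Z$ and $\codim \bar X=\dim Z$), combined with the characteristic-zero orbit-map argument (the differential kernel $\bar g\cap\mathcal F_x$ is the Lie algebra of the leaf stabilizer, which is finite since the leaf is of general type --- this identification, which you flagged as a worry, is fine by standard orbit theory, the action on $B$ being algebraic via the Chow variety), forces the orthogonal splitting $V=\bar g\oplus V_N$ and hence the Lagrangian property; a pleasant by-product is that the ``horizontality'' of the leaves in the final product decomposition comes out automatically, a point the paper essentially asserts when it says $f$ becomes the projection to $D$. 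Your global-generation argument for semiampleness of $K_X$ is also a nice shortcut past Ueno--Kawamata.

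The genuine gap is at the base of your reduction: the identification $\mathrm{rad}(\sigma|_{V_0})=\operatorname{Lie}C$. The inclusion $\bigcap_x\mathcal F_x\subseteq\operatorname{Lie}C$ is easy (a constant vector field tangent to every leaf has flow lines inside leaves, so it integrates to translations preserving $F$), but the reverse inclusion --- that \emph{all} of $C=\mathrm{Stab}(F)^0$, defined from \emph{one} leaf, is tangent to \emph{every} leaf --- is precisely what you assert without proof; a priori different leaves could have different connected stabilizers. You also need this for $C+X=X$, without which the quotient $\bar X=X/C$ in your essential case is not even defined. The paper's setup avoids this by applying Ueno to $X$ itself first: $X$ is invariant under $A=\mathrm{Stab}(X)^0$, the foliation is $A$-invariant, and since one leaf surjects onto $Z=X/A$, the $A$-translates of $F$ cover $X$, so every leaf is an $A$-translate of $F$ and in particular is stabilized by $C\subseteq A$. (Alternatively: $b\mapsto\mathrm{Stab}(F_b)^0$ takes countably many values, its dimension is upper semicontinuous, and containment of a fixed subtorus is a closed condition on $b$, so constancy on a very general leaf propagates to all of $B$.) Some such argument must be supplied. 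A secondary flaw: your justification of $\dim P=\dim C$ (``nondegeneracy forces a transverse subspace pairing perfectly with $\operatorname{Lie}C$'') only yields $\dim C\le\dim P$; the equality is equivalent to coisotropy of $V_0=\operatorname{Lie}\langle X\rangle$ in $V$, which is the paper's one-line observation $V_0^{\perp}=\bigcap_x(T_xX)^{\perp}\subseteq T_xX\subseteq V_0$. In fact this is implicit in your own chain $\mathrm{rad}(\sigma|_{V_0})=\bigcap_x\mathcal F_x=\bigcap_x(T_xX)^{\perp}=V_0^{\perp}$ (coisotropy makes the ``$\cap\,V_0$'' redundant), but since $\dim C=\dim P$ is one of the five conclusions of the theorem, you must draw that consequence explicitly rather than appeal to a pairing argument that proves only one inequality.
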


\begin{remark}
We shall describe all such $\sigma$ in the proof of Theorem~\ref{tab}.
\end{remark}

\subsection*{Acknowledgments}
We are grateful to E.~Macri and Y.~Zarhin for useful discussions, and to O.~Debarre and C.~Voisin for indicating Schoen's examples of simple Abelian fourfolds with Lagrangian surfaces. 

\section{Proofs of Theorems~\ref{kappaf} and~\ref{kappaf'}}

The proofs of these theorems use several auxiliary results.  First of all, let us recall some general background. We follow \cite{AC-div}, where we considered only algebraically coisotropic hypersurfaces, but many starting remarks are valid in general.

Let $X$ be algebraically coisotropic; then the family of leaves of the characteristic foliation ${\mathcal F}$ defines a morphism from $X$ to its Chow variety. Normalizing the image if necessary, we obtain a fibration $f\colon X\to B$ with fibres which are leaves of ${\mathcal F}$ (the ``characteristic fibration''). We recall that the codimension $c$ of $X$ is equal to the relative dimension of $f$, and we usually denote by $F$ the general fibre of $f$. Each (set-theoretic) fibre is smooth, but some of them can be multiple as $f$ is not necessarily a smooth morphism. Equivalently, some leaves of $f$ have non-trivial holonomy. The base $B$ can be singular at the corresponding points but has only quotient singularities. In fact, all holonomy groups of the foliation are finite, and locally in the neighbourhood of the fibre $F_0$, the variety $X$ is diffeomorphic to the quotient of $T\times F$ by the holonomy group $G$, where $F$ is the Galois covering of $F_0$ with Galois group $G$, $T$ is a local transverse and $G$ acts diagonally. This result is known in foliation theory as \emph{Reeb stability}. In the neighbourhood of $F_0$, the map $f$ is the projection to $T/G$.

Complex-analytically, it is not always true that a neighbourhood of $F_0$ in $X$ is a quotient of the product because the complex structure on the general leaves $F$ can vary, but it remains true that the quotient $T/G$ gives a local model for $B$.

The following lemma is due to Sawon (see \cite[Lemma~6]{Saw} for the case of hypersurfaces). The argument is purely local on $X$, depending only on the $d$-closedness of $\sigma$.

\begin{lemma}\label{base}
Let $X$ be smooth and algebraically coisotropic, and let $f\colon X\to B$ be the characteristic fibration. Denote by $f\colon X^0\to B^0$ the restriction to the smooth locus $($that is, $B^0=B-E$, where $b\in E$ if and only if the fibre $X_b$ is a multiple fibre\,$)$.
Then there is a holomorphic symplectic form $\eta$ on $B^0$ with $f^*\eta=\sigma|_X$.
\end{lemma}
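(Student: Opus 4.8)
The plan is to show that the closed holomorphic $2$-form $\sigma|_X$ is \emph{basic} for the submersion $f\colon X^0\to B^0$, and therefore descends to the desired form $\eta$ on $B^0$. All the geometric content is encoded in two local properties of $\sigma|_X$, after which the construction of $\eta$ is a standard descent of basic forms.

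First I would record these two properties on $X^0$. By the coisotropy hypothesis, the kernel of $(\sigma|_X)_x$ at each point is exactly the tangent space $T_{\mathcal{F},x}$ to the characteristic foliation, which over $B^0$ coincides with the relative tangent space $T_{X^0/B^0,x}$; thus $\sigma|_X$ is \emph{horizontal}, i.e.\ $\iota_v(\sigma|_X)=0$ for every vertical vector field $v$. Moreover, since $\sigma$ is $d$-closed and $X$ is a complex submanifold, $\sigma|_X$ is a closed holomorphic form, so $d(\sigma|_X)=0$. Combining these via Cartan's formula gives, for every vertical $v$,
\[
\mathcal{L}_v(\sigma|_X)=\iota_v\,d(\sigma|_X)+d\,\iota_v(\sigma|_X)=0,
\]
so $\sigma|_X$ is also invariant along the fibres. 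This is exactly the step where only the closedness of $\sigma$ enters, as announced.

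Next I would invoke the descent of basic forms. Over $B^0$ the fibration $f$ is a submersion whose fibres are single leaves of $\mathcal{F}$, hence connected; a form that is horizontal and invariant along such a submersion is the pullback of a unique form on the base. Concretely, for $b=f(x)$ the differential $df_x$ is surjective with kernel $\ker(\sigma|_X)_x$, so the recipe $\eta_b(df_x(u),df_x(v)):=(\sigma|_X)_x(u,v)$ is well defined on $T_{B^0,b}$; invariance together with connectedness of the fibre shows it is independent of the chosen $x$, giving a global form $\eta$ on $B^0$ with $f^*\eta=\sigma|_X$. Working in local holomorphic coordinates adapted to $f$, horizontality and invariance mean that $\sigma|_X$ has holomorphic coefficients depending only on the base variables, so $\eta$ is holomorphic.

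Finally I would verify that $\eta$ is symplectic. It is closed because $f^*(d\eta)=d(f^*\eta)=d(\sigma|_X)=0$ and $f^*$ is injective on forms for a submersion. It is non-degenerate because the kernel of $(\sigma|_X)_x$ is precisely the vertical space $\ker(df_x)$: if $\eta_b(df_x(u),\cdot)=0$ then $(\sigma|_X)_x(u,\cdot)=0$, so $u$ is vertical and $df_x(u)=0$; note also that $\dim B^0=\dim X-c=2(n-c)$ is even, consistent with $\eta$ being symplectic. The computation of basicness is immediate, so the only point requiring genuine care is the descent: one must ensure that $B^0$ is chosen exactly so that $f$ is a true submersion with connected, reduced (non-multiple) fibres. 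This is what removing the multiple-fibre locus $E$ guarantees, via the Reeb stability picture recalled above, so that the local descent of the basic form patches, by uniqueness of $f^*$, to a global $\eta$ on $B^0$.
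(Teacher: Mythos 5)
Your proposal is correct and is essentially the paper's argument: the paper (following Sawon) phrases the key step as holonomy maps between local transverses being time-$1$ flows preserving $\sigma|_X$ thanks to $d$-closedness, which is exactly your Cartan-formula computation $\mathcal{L}_v(\sigma|_X)=\iota_v\,d(\sigma|_X)+d\,\iota_v(\sigma|_X)=0$ for vertical $v$, after which both proofs descend the basic form to $B^0$ and glue by uniqueness of the descended form.
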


\begin{proof}
  As in \cite{Saw}, we remark that the restriction of $\sigma$ gives a holomorphic symplectic form on any local transverse and that if $U_1$ and $U_2$ are two local transverses over the same small $U\subset B^0$, then the natural isomorphism $\phi\colon U_1\to U_2$ (sending the intersection point of $U_1$ with some leaf $L$ to that of $U_2$ with $L$) can be viewed as a $t=1$ map of a flow $\phi_t$ preserving $\sigma|_X$. Using the $d$-closedness of $\sigma$, one obtains as in \cite{Saw} that the forms agree on local models: $\sigma|_{U_1}=\phi^*\sigma|_{U_2}$, yielding a global holomorphic symplectic form $\eta$ on $B^0$.
  \end{proof}


The next result has been obtained for hypersurfaces in \cite[Lemma~2.3]{AC-div}, and its proof in the general case is analogous.

\begin{proposition}\label{nomult}
In the situation of Lemma~\ref{base}, the fibration $f$ has no multiple fibres in codimension $1$, so that $\sigma$ descends to a symplectic form $\eta$ outside of a subset of codimension at least $2$ in $B$. Moreover, the highest exterior power $\alpha=\eta^{n-c}$ trivializes $K_B$, and $B$ has only canonical singularities. Therefore:
\begin{enumerate}
\item Any smooth model of\, $B$ has $\kappa=0$, and in particular $B$ is not uniruled.
\item The smooth locus $B^0$ of\, $B$ is special in the sense of\, \cite{AC-iso}.
\end{enumerate}
\end{proposition}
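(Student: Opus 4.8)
The plan is to work locally on $B$ near the multiple-fibre locus $E$ and to exploit the fact that the holonomy of $\mathcal{F}$ acts \emph{symplectically} on a transverse. Recall from the discussion preceding Lemma~\ref{base} that, by Reeb stability, over a neighbourhood of a fibre $F_0$ the map $f$ is the projection $T\times F\to T$ followed by the quotient by a finite holonomy group $G$ acting diagonally, so that $B$ is locally $T/G$ with $T$ a local transverse of dimension $\dim B=2(n-c)$. The restriction $\sigma|_T$ is a holomorphic symplectic form, and the computation in the proof of Lemma~\ref{base}---identifying the holonomy with the $t=1$ map of a flow preserving $\sigma|_X$---shows that $G$ preserves $\sigma|_T$. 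Hence $G\subset\Sp(T,\sigma|_T)\subset\SL(T)$, and this is the input that drives everything.

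First I would rule out multiple fibres in codimension $1$. A fibre over $[t]\in T/G$ is multiple exactly when the stabilizer $G_t$ is non-trivial, so the multiple-fibre locus is the image of $\bigcup_{g\neq e}\mathrm{Fix}(g)$; it has a codimension-$1$ component only if some $g\neq e$ fixes a hyperplane of $T$, that is, only if $G$ contains a pseudo-reflection. But a pseudo-reflection has determinant $\neq 1$, whereas every element of $\Sp(T,\sigma|_T)$ has determinant $1$; hence $G$ contains no pseudo-reflection, $E$ has codimension $\geq 2$, and the symplectic form $\eta$ of Lemma~\ref{base} is defined on $B^0=B\setminus E$ with complement of codimension $\geq 2$. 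I expect this to be the step carrying the main content: everything hinges on the elementary but decisive observation that symplectic holonomy excludes pseudo-reflections, and the only genuine verification is that $G$ really acts symplectically on $T$ (which is exactly the flow argument already used for Lemma~\ref{base}).

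Next, since $G\subset\SL(T)$ acts without pseudo-reflections, each local quotient $T/G$ is Gorenstein and the $G$-invariant, nowhere-vanishing top-degree form $\eta^{\,n-c}$ (of degree $2(n-c)=\dim B$) descends to a generator of $K_{T/G}$ near the singular points; as it agrees with $\eta^{\,n-c}$ on $B^0$, it glues to a nowhere-vanishing global section $\alpha=\eta^{\,n-c}$ trivializing $K_B$. Canonical singularities then follow from the Reid--Tai criterion: for $g\in\SL(T)$ of order $r$ with eigenvalues $e^{2\pi i a_j/r}$, $0\le a_j<r$, the relation $\sum_j a_j\equiv 0\pmod r$ forces the age $\tfrac1r\sum_j a_j$ to be a positive integer, hence $\geq 1$, for every $g\neq e$.

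Finally, the two itemized consequences follow formally. Because $B$ has canonical singularities, the plurigenera of any resolution $\widetilde B\to B$ coincide with those of $B$; combined with $K_B\cong\mathcal{O}_B$ this gives $\kappa(\widetilde B)=\kappa(B)=0$, and a manifold of Kodaira dimension $0$ is not uniruled, proving~(1). For~(2), the manifold $B^0$ carries a holomorphic symplectic form and has trivial canonical bundle, hence $\kappa(B^0)=0$, so it is special in the sense of \cite{AC-iso} by the corresponding statement there. Throughout, the argument runs parallel to \cite[Lemma~2.3]{AC-div}, the only change being that the transverse $T$ now has dimension $2(n-c)$ instead of $2n-2$; since this dimension enters only through being even, the hypersurface proof adapts with no essential modification.
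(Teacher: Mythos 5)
Your treatment of the main body of the proposition is correct and genuinely different from the paper's. For the absence of multiple fibres in codimension $1$, the paper sets $\omega=\sigma^{n-c}=f^*\alpha$, writes $f$ in local coordinates as $u=z^m$, $u'=z'$, and runs the coordinate computation of \cite[Lemma~2.3]{AC-div} to force $m=1$; you instead observe that the holonomy acts symplectically on a transverse, hence lies in $\Sp\subset\SL$ and so contains no pseudo-reflection, so the fixed loci of non-trivial holonomy elements (whose images form the multiple-fibre locus) have codimension at least $2$. This is a legitimate and arguably more conceptual route, and it needs exactly the input you identify: that holonomy preserves $\sigma|_T$, which does follow from the same flow argument used for Lemma~\ref{base}. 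Likewise, replacing Freitag's extension lemma by the Reid--Tai criterion (the age of a non-trivial $\SL$-element without eigenvalue-pattern of a pseudo-reflection is a positive integer, hence $\geq 1$) is a correct alternative proof that the singularities of $B$ are canonical, and your deduction of item (1) is fine.

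The gap is in item (2). You claim that because the open manifold $B^0$ carries a symplectic form and has trivial canonical bundle, $\kappa(B^0)=0$ and hence $B^0$ is special. For a non-compact manifold this inference is invalid: speciality and the Kodaira dimension of $B^0$ are defined through a compactification, i.e.\ via the log canonical bundle $K_{B'}+E$ of a smooth compactification with snc boundary $E$, and triviality of $K_{B^0}$ says nothing about the poles of pluricanonical sections along the boundary. A once-punctured elliptic curve has trivial canonical bundle yet is of log general type, hence not special; a product of two such even carries a holomorphic symplectic form. The correct argument -- the paper's -- uses precisely what you have already established: since $B$ has canonical singularities and $K_B$ is trivialized by $\alpha$, on a resolution $r\colon B'\to B$ with reduced exceptional divisor $E$ one has $K_{B'}+E=E'$ with $E'\geq E$ effective and supported on the exceptional locus, whence $h^0\bigl(B',m(K_{B'}+E)\bigr)=h^0(B',mE')=1$ for all $m>0$; thus $\kappa(B',K_{B'}+E)=0$, and the pair $(B',E)$, equivalently $B^0$, is special because log pairs of Kodaira dimension zero are special. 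Your proof of (2) needs this extra computation; the ingredients you established earlier do suffice for it.
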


\begin{proof}
The argument for the absence of multiple fibres in codimension $1$ is that of \cite[Lemma~2.3]{AC-div}; we only adapt the notation. 
We set $\omega=\sigma^{n-c}$, so that $f^*\alpha=\omega$ over $B^0$. Suppose there is a divisor of multiple fibres; then, replacing
$B$ with a small open neighbourhood of a general point of the image $E$ of this divisor, we may assume that $B$ is a polydisk with coordinates $(u_1, \dots, u_{2(n-c)})=(u,u')$ and $E$ is given by $u=0$ (here $u'$ denotes the $(2(n-c)-1)$-tuple of remaining coordinates) . Moreover, we can choose local coordinates $(z_1,\dots, z_{2(n-c)}, w_{2(n-c)+1},\dots, w_{2n-c})=(z, z', w)$ on $X$ such that $f$ is locally given by $u=z^m$, $u'=z'$. Then the same calculation as in \cite[Lemma~2.3]{AC-div} shows that $m=1$.

Hence $\alpha$ is a non-vanishing holomorphic $2(n-c)$-form defined over a complement of a subset of codimension at least $2$, and by definition it trivializes $K_B$. Moreover, since $B$ has only quotient singularities, a lemma due to Freitag \cite{F} shows that $\alpha$ extends to any resolution $r\colon B'\to B$ of $B$, and so the singularities of $B$ are canonical; hence $\kappa(B')=0$. If $E$ is the reduced exceptional set of $r$, then $h^0(B', m(K_{B'}+E))=h^0(B',mE')=1$ for each $m>0$ since $K_{B'}+E:=E'\geq E$ is an effective divisor supported on $E$. Thus $\kappa(B',K_{B'}+E)=0$, and $(B',E)$ is special.
\end{proof}


Also notice that in the exact sequence $0\to {\mathcal F}\to T_X\to N_F\to 0$, the normal bundle $N_F$ is equipped with a non-degenerate $2$-form; hence it has trivial determinant.  It follows that $K_{\mathcal F}=K_X$.

\begin{proof}[Proof of Theorem~\ref{kappaf}]
When $K_F$ is semiample, the results Theorem 5.1 and Corollary 5.2 of \cite{AC-iso} apply, yielding that $f$ is an isotrivial fibration. Moreover, by Proposition~\ref{nomult}, $f$ has no multiple fibre in codimension~$1$.  Therefore, $K_X=K_{X/B}+f^*K_B$. By the same proposition, $K_B$ is trivial and so $K_X=K_{X/B}$.

By isotriviality, the family $f\colon X\to B$ trivializes after a finite covering $B'\to B$: the base change $f'\colon X'\to B'$ is the projection from $X'=F\times B'$ to the second factor.  Let $g$ be the natural base-change morphism from $X'$ to $X$ and $h$ the projection from $X'$ to $F$. Then $h^*(K_F)= K_{X'/B'}= g^*K_{X/B}$. Hence $\kappa(F)$ equals the Iitaka dimension of $K_{X/B}$, which is equal to $\kappa(X)$.
\end{proof}

\begin{proof}[Proof of Theorem~\ref{kappaf'}]
We repeat the preceding arguments, replacing \cite[Theorem 5.1, Corollary 5.2]{AC-iso} with \cite[Theorem 1.1]{T21}, applied to the special base $B^0$.
\end{proof}

\begin{proof}[Proof of Corollary~\ref{absimple}]
The structure of a submanifold of an Abelian variety is described in \cite{Ueno}: if $X$ is such a submanifold, then there exists an Abelian subvariety $A\subset M$ such that $X=p^{-1}(Z)$, where $p$ is the projection from $M$ to the quotient Abelian variety $L=M/A$ and $Z\subset L$ is a subvariety of general type.
 Now if $M$ is simple, then $A$ is trivial and $Z=X$, so that Corollary~\ref{nefbig} applies (note that $K_Z$ is nef and big, and even ample when $M$ is simple).
\end{proof}

\section{Proof of Theorem~\ref{tab}}
Let $M$ be an Abelian variety, and let $X$ be an algebraically coisotropic submanifold of $M$. We derive from Ueno's classification result \cite[Theorem 10.9]{Ueno} that $K_X$ is semiample: indeed, in the notation right above, $K_X$ is the inverse image of $K_Z$, which is nef and big, hence semiample by Kawamata base-point-freeness. Clearly, in the same notation, the restriction $g\colon  X\to Z$ of the projection $p\colon  M\to L=M/A$ is the Iitaka fibration of $X$, the fibres of $g$ are isomorphic to $A$, and $\kappa(X)=\kappa(Z)=\dim(Z)$. Denote by $N$ the subtorus of $L$ generated by $Z$.

Since $K_X$ is semiample, Theorem~\ref{kappaf} applies. Let $f\colon X\to B$ be the characteristic fibration on $X$; it is isotrivial with general fibre $F$. Since $\kappa(X)=\kappa(F)$, the restriction of $g$ to $F$ is the Iitaka fibration of $F$. Applying Ueno's theorem again, we see that the fibre of $g|_F$ is a subtorus $C$ of $A$.

Now Poincar\'e complete reducibility yields that passing to a finite \'etale covering, we may assume $A=D\times C$, $L=N\times P$ and $M=D\times C\times N \times P$, so that $Z\subset N$ induces the embedding of $X=D\times C\times Z$ into $M$, and $g\colon X=D\times C\times Z\to Z$ and $f\colon D\times C\times Z\to D$ are natural projections.

We shall now analyze the form $\sigma$ according to the decomposition $M=D\times C\times N\times P$. We view any torus as the quotient of its tangent space by a lattice, and by abuse of notation, denote by the same letter a subtorus of $M$ and its tangent space at any point. By the coisotropy condition, $C+T_{Z,z}$ is orthogonal (with respect to $\sigma$) to $C+D$ at each point $z\in Z$. Thus $C$ and $D$ are orthogonal, and $C+D$ is orthogonal to $T_{Z,z}$ for all $z\in Z$. Since $Z$ generates the subtorus $N$, the spaces $T_{Z,z}\subset N$ generate the vector space $N$ (that is, $T_N$), by the Gauss map. Thus $C+N$ and $C+D$ are orthogonal. Equivalently, $C$ is isotropic, and $C$, $N$ and $D$ are pairwise orthogonal.

The K\"unneth formula then implies that $\sigma=\sigma_D\oplus\sigma_N\oplus \sigma_P\oplus s$, where $\sigma_N$ is the restriction of $\sigma$ to $N$, and similarly for $D$ and $P$, and $s$ comes from an element in $H^{1,0}(P)\otimes H^{1,0}(D\times C\times N)$. This means the following: we have a pairing $s(p,q)=\sum_i\alpha_i(p)\beta_i(q)$ for $p\in P$, $q \in Q:=D\times C\times N$ and linear forms $\alpha_i$, $\beta_i$, and we associate to it the alternating bilinear form $s'$ on $P\times Q$ defined by $s'(p+q,p'+q'):=s(p,q')-s(p',q)$. 

Observe that the restriction $\sigma_D$ of $\sigma$ to $D$ is a symplectic form, for example by Lemma~\ref{base}. The following claim is crucial. 

\begin{claim}\label{restriction}
The restriction $\sigma_N$ of\, $\sigma$ to $N$ is a symplectic form, and $Z$ is coisotropic in $N$.
\end{claim}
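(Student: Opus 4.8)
The plan is to show two things about the restriction $\sigma_N$: that it is non-degenerate on $N$, and that $Z$ is coisotropic with respect to it. The starting point is the decomposition already established, namely that $C$, $N$, $D$ are pairwise $\sigma$-orthogonal and that $C$ is isotropic. From this I would first record the consequence that $N$ and $D$ together carry a non-degenerate piece of $\sigma$ and that the degeneracy of $\sigma$ must be absorbed entirely by the $C$ and $P$ factors through the pairing $s$. Since $\dim(C)=\dim(P)$ (item (5) of Theorem~\ref{tab}, whose proof presumably comes after the claim, so I should be careful to use only what is available; alternatively I can derive this dimension count here), the rôle of $s'$ is to pair $C$ nondegenerately against $P$, making the form $\sigma$ symplectic on the whole of $M$ while $\sigma$ restricted to $C\oplus P$ is symplectic and $\sigma$ restricted to $N$ and to $D$ are the direct summands $\sigma_N$, $\sigma_D$.

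Next I would prove non-degeneracy of $\sigma_N$ directly. Because the splitting is a K\"unneth (orthogonal) splitting $\sigma=\sigma_D\oplus\sigma_N\oplus\sigma_P\oplus s$, the kernel of $\sigma|_N$ is contained in the kernel of the global $\sigma$ intersected with $N$; but $\sigma$ is symplectic on $M$, so its kernel is zero. Concretely, if $v\in N$ is $\sigma_N$-orthogonal to all of $N$, then by the pairwise orthogonality of $N$ with $C$ and $D$ and by the fact that $s$ only pairs $P$ against $Q=D\times C\times N$, one checks $v$ is $\sigma$-orthogonal to $D$, $C$, $N$; the only way $v$ could fail to be in the kernel of $\sigma$ is through a nonzero pairing with $P$, i.e.\ $s'(v,\cdot)\neq 0$ on $P$. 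So the real content is that $v\in N$ has $s'(v,p)=0$ for all $p\in P$. This is where I expect the argument to turn on the geometry rather than pure linear algebra: I would use that $Z$ generates $N$ and that, by Lemma~\ref{base}, the characteristic form descends, to constrain $s$ so that its $N$-component vanishes.

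For the coisotropy of $Z$ in $(N,\sigma_N)$, I would translate the coisotropy of $X=D\times C\times Z$ in $(M,\sigma)$ fibrewise. The characteristic foliation of $X$ has leaves $C\times Z$ (the fibres of $f\colon X\to D$), of rank $c=\codim_M X$. Writing $\codim_N Z = c - \dim C$ (since $\codim_M X = \dim D + \dim P = \dim D + \dim C$ by the dimension count, while $\dim C$ of the corank is taken up by the isotropic $C$), I want to see that $T_{Z,z}^{\perp_{\sigma_N}}\subseteq T_{Z,z}$ for each $z$. Starting from $X$ coisotropic, $T_{X,x}$ contains its $\sigma$-orthogonal; projecting this containment to $N$ along the orthogonal splitting, and using that $C$ is isotropic and orthogonal to $N$ and $D$, the $\sigma$-orthogonal of $T_{X,x}$ restricts to the $\sigma_N$-orthogonal of $T_{Z,z}$. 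Hence $Z$ inherits coisotropy. The main obstacle, as flagged above, is the non-degeneracy: ruling out an $N$-direction in the kernel of $\sigma$ genuinely requires the global symplectic structure together with the generation hypothesis on $Z$, and this is the step I would devote the most care to, likely by combining the K\"unneth description of $s$ with the Gauss-map argument already used to prove that $C+N$ is orthogonal to $C+D$.
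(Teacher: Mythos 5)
Your reduction of the problem is correct up to a point: you rightly observe that an element $v$ in the kernel of $\sigma_N$ is automatically $\sigma$-orthogonal to $D$, $C$ and $N$, so the whole question is whether it can pair nontrivially with $P$ through $s$. But your plan for ruling this out --- ``constrain $s$ so that its $N$-component vanishes'' --- is never carried out, and in fact it cannot be: the $N$-component of $s$ need not vanish. This is visible in the converse construction at the end of the paper's proof of Theorem~\ref{tab}: given $\sigma_N,\sigma_D$ symplectic, $Z\subset N$ Lagrangian of general type, and $s$ non-degenerate on $P\times C$, the submanifold $X=D\times C\times Z$ is coisotropic for $\sigma=\sigma_D\oplus\sigma_N\oplus\sigma_P\oplus s$ \emph{whatever} the $D$- and $N$-components of $s$ are; in the linear system cutting out $T_{X,x}^{\perp}$, the equation $s(p,c')=0$ for all $c'\in C$ forces $p=0$, after which the extra components of $s$ drop out of the remaining equations. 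So no argument --- Gauss map, Lemma~\ref{base}, or otherwise --- can establish the vanishing you are after, and the non-degeneracy step, which you yourself flag as the one needing the most care, is a genuine gap.

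The mechanism the paper actually uses is different and rests on two inputs absent from your sketch: the coisotropy of $X$ itself and the fact that $Z$ is of general type. Let $K=\ker\sigma_N$. Since $K$ is $\sigma$-orthogonal to all of $Q=D\times C\times N$ and $T_{X,x}\subset Q$, one gets $K\subset T_{X,x}^{\perp}$ for every $x\in X$; coisotropy gives $T_{X,x}^{\perp}\subset T_{X,x}$, and intersecting with $N$ yields $K\subset T_{Z,z}$ for every $z\in Z$. But $K$ is a fixed linear subspace of $N$, so $K|_Z$ is a \emph{trivial} subbundle of $T_Z$; since $Z$ is of general type, $H^0(Z,T_Z)=0$, forcing $K=0$. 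Note that this argument nowhere needs $\dim(C)=\dim(P)$, whereas your first paragraph leans on that equality, which the paper proves only \emph{after} (and using) Claim~\ref{restriction} --- a circularity you acknowledge but do not resolve. Your argument for the second half of the claim (projecting the coisotropy of $X$ onto $N$ via the orthogonal splitting, as the paper does with the restricted form $\sigma'$ on $Q$) is essentially right, though your parenthetical count $\codim(X,M)=\dim(D)+\dim(P)$ is incorrect (the correct identity is $\codim(X,M)=\codim(Z,N)+\dim(P)$) and is in any case unnecessary for coisotropy.
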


\begin{proof}
Indeed, let $K$ be the kernel of $\sigma_N$. It gives a foliation ${\mathcal K}$ on $N$, which is a trivial subbundle in the tangent bundle $T_N$. The orthogonality relations established above imply that $K$ is contained in the $\sigma$-orthogonal to $D\times C\times N$, hence also in the $\sigma$-orthogonal to $T_X$ at any point $x\in X$. So $K$ is also contained in the $\sigma_N$-orthogonal to $T_Z$ at any point $z\in Z$. In other words, ${\mathcal K}|_Z\subset T_Z$ (that is, $Z$ is invariant by the foliation ${\mathcal K}$). But since $Z$ is of general type, it has only finitely many automorphisms. So $H^0(Z, T_Z)=0$ and $T_Z$ cannot have a trivial subbundle of positive rank; hence $K=0$ and $\sigma_N$ is symplectic.

To deduce the second part of the claim, consider the restriction $\sigma '$ of $\sigma$ to $D\times C\times N$. Since the $\sigma '$-orthogonal to $T_X$ is contained in its $\sigma$-orthogonal, $X$ is also coisotropic with respect to $\sigma'$ (by definition, we say that $X$ is coisotropic with respect to an arbitrary, not necessarily non-degenerate, form $\sigma'$ if $T_X$ contains its $\sigma'$-orthogonal). By the orthogonality relations above, this implies that $Z$ is $\sigma_N$-coisotropic in $N$, finishing the proof of the claim.
\end{proof}

We now give two different proofs of the fact that $Z\subset N$ is Lagrangian and $\dim(C)=\dim(P)$. The first one is shorter, the second one additionally gives information on $\sigma$.

\smallskip
{\it First proof.} 
We start by recalling that $X$ is $\sigma$-coisotropic in $M$ and note that $Q=D\times C\times N$ is $\sigma$-coisotropic in $M$ as well: indeed, since $X\subset Q$, the $\sigma$-orthogonal to $Q$ is contained in the $\sigma$-orthogonal to $T_X$ at every point of $X$, so it is also contained in $Q$ at every point of $X$. By the triviality of the tangent bundle of a torus, the orthogonal to a subtorus at a point does not depend on this point. Hence 
the $\sigma$-orthogonal to $Q$ is contained in $Q$ at every point, meaning that $Q$ is coisotropic. 

By the definition of coisotropy, the codimension of $Q=D\times C\times N$ in $M$ is equal to the corank of the restriction of $\sigma$ to $D\times C\times N$; that is, $\dim(P)=\dim(C)$. In the same way, the coisotropy of $X$ in $M$ gives $\dim(C\times Z)=\codim(X,M)$ (indeed, the fibre of the coisotropic fibration is $C\times Z$), hence $\dim(C)+\dim(Z)=\codim(Z,N)+\dim(P)$ and so again $\dim(Z)=\codim(Z,N)$, meaning that $Z$ is $\sigma_N$-Lagrangian in $N$.

\smallskip
{\it Second proof} 

\begin{lemma}
The restriction $s_{P\times C}$ is symplectic on $P\times C$; hence $\dim(C)=\dim(P)$ and $Z\subset N$ is $\sigma_N$-Lagrangian.
\end{lemma}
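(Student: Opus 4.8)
The plan is to argue pointwise on tangent spaces, using that on an abelian variety all tangent bundles are canonically trivial: it suffices to study the single symplectic vector space $(T_M,\sigma)=D\oplus C\oplus N\oplus P$ together with the subspaces $T_X=D\oplus C\oplus T_{Z,z}$. I would write $\sigma=\sigma_D\oplus\sigma_N\oplus\sigma_P\oplus s$ as above and split the cross term as $s=s_{P\times D}+s_{P\times C}+s_{P\times N}$ along $Q=D\times C\times N$. The first step is to record, for a vector $v=v_D+v_C+v_N+v_P$, what it means to lie in the $\sigma$-orthogonal $T_X^\perp$ of $T_X$. Since $C$ is isotropic and $D,C,N$ are pairwise orthogonal, the three conditions $\sigma(v,D)=\sigma(v,C)=\sigma(v,T_{Z,z})=0$ read, respectively, that $v_D$ is determined by $v_P$ through the invertible $\sigma_D$, that $s_{P\times C}(v_P,-)$ vanishes on $C$, and that $\sigma_N(v_N,-)$ agrees with $-s_{P\times N}(v_P,-)$ on $T_{Z,z}$, while $v_C$ remains free.

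The core of the argument is a pair of opposite inequalities for $\dim C$ and $\dim P$, each coming from a different non-degeneracy. For $\dim C\le\dim P$: a vector $c\in C$ pairing trivially with $P$ would, by isotropy of $C$ and its orthogonality to $D$ and $N$, lie in the kernel of the symplectic form $\sigma$ on $M$, hence vanish; thus $c\mapsto s_{P\times C}(-,c)$ embeds $C\hookrightarrow P^*$. For the reverse inequality I would exploit the coisotropy of $X$, i.e.\ $T_X^\perp\subseteq T_X$. As $T_X=D\oplus C\oplus T_{Z,z}$ meets $P$ trivially, every $v\in T_X^\perp$ has $v_P=0$. Conversely, the description above shows that every $p_0\in\ker(s_{P\times C}\colon P\to C^*)$ occurs as the $P$-component of some $v\in T_X^\perp$: one solves for $v_D$ via $\sigma_D$ and for $v_N$ via $\sigma_N$, taking $v_C=0$. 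Here the surjectivity onto $\ker(s_{P\times C})$ uses that $\sigma_N$ is symplectic (Claim~\ref{restriction}), so that $\sigma_N\colon N\xrightarrow{\sim}N^*$ followed by restriction $N^*\twoheadrightarrow T_{Z,z}^*$ is surjective, whence the required $v_N$ exists. Coisotropy therefore forces $\ker(s_{P\times C}\colon P\to C^*)=0$, giving $\dim P\le\dim C$.

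Combining the two inequalities yields $\dim C=\dim P$ together with non-degeneracy of $s_{P\times C}$ on both sides, i.e.\ the associated alternating form $s'$ restricts to a symplectic form on $P\times C$. It then remains to see that $Z$ is $\sigma_N$-Lagrangian. With $v_P=0$ now automatic for $v\in T_X^\perp$, the computation of the orthogonal collapses to $T_X^\perp=C\oplus T_{Z,z}^{\perp_{\sigma_N}}$. By coisotropy, $T_X^\perp$ is exactly the tangent space to the characteristic leaf, which is the fibre $C\times Z$ of $f$; comparing $C\oplus T_{Z,z}^{\perp_{\sigma_N}}=C\oplus T_{Z,z}$ gives $T_{Z,z}^{\perp_{\sigma_N}}=T_{Z,z}$, so $Z$ is Lagrangian. (Equivalently, one may compare the relative dimension $\dim C+\dim Z$ of the characteristic fibration with $\codim(X,M)=\dim N+\dim P-\dim Z$ and use $\dim C=\dim P$ to obtain $2\dim Z=\dim N$, which with the coisotropy of $Z$ from Claim~\ref{restriction} again gives Lagrangian.)

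The step I expect to be most delicate is the explicit identification of $T_X^\perp$ in the block decomposition, and in particular the surjectivity claim that each $p_0\in\ker(s_{P\times C})$ genuinely extends to an element of $T_X^\perp$; this is precisely where the non-degeneracy of $\sigma_N$ is indispensable. Everything else is bookkeeping with the K\"unneth decomposition and the orthogonality and isotropy relations already established.
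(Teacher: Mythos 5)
Your proof is correct and follows essentially the same route as the paper's own (second) proof of this lemma: you compute $T_X^{\perp}$ blockwise in $D\oplus C\oplus N\oplus P$, obtain the same three linear conditions, use coisotropy ($T_X^{\perp}\subset T_X\subset D\oplus C\oplus N$) together with the non-degeneracy of $\sigma_D$ and $\sigma_N$ to lift any $p_0\in\ker\bigl(s_{P\times C}\colon P\to C^{*}\bigr)$ to an element of $T_X^{\perp}$ with non-zero $P$-component and thus force $\ker\bigl(s_{P\times C}\bigr)=0$, and finally identify $T_X^{\perp}=C\oplus T_{Z,z}^{\perp_{\sigma_N}}$ with the leaf tangent space $C\oplus T_{Z,z}$ to conclude that $Z$ is Lagrangian. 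The only notable (and welcome) difference is your argument for $\dim C\le\dim P$: you deduce injectivity of $C\to P^{*}$ directly from the non-degeneracy of $\sigma$ on $M$ via the orthogonality of $C$ to $D$, $C$ and $N$, which makes fully explicit a step the paper compresses into the remark that $C$ is isotropic for the (already symplectic) form on $P\oplus C$.
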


\begin{proof}
We fix $x\in X$ and compute the $\sigma$-orthogonal $TX_x^{\perp}$ of $TX_x$, which is, by assumption, $C\times TZ_z$ if $z$ is the $Z$-component of $x$. This orthogonal consists of the quadruples $(d,c,n,p)\in D\times C\times N\times P$ such that $\sigma((d,c,n,p),(d',c',n'))=0$ for all $(d',c',n')\in D\times C\times TZ_z$. This value of $\sigma$ is equal to $\sigma_D(d,d')+\sigma_N(n,n')+s(p,(d',c',n'))$. By the linearity of this expression, for $(d,c,n,p)$ fixed, in $(d',c',n')$, this vanishing is equivalent to the following system of equations:
\begin{enumerate}
\item $s(p,c')=0$ for all $c'\in C$, 
\item $\sigma_D(d,d')+s(p,d')=0$ for all $d'\in D$,  
\item $\sigma_N(n,n')+s(p,n')=0$ for all $n'\in TZ_z$.
\end{enumerate}

Assume the first equation had a non-zero solution $p$. Since $\sigma_N$ and $\sigma_D$ are symplectic, this solution $p$ could be uniquely lifted to a solution $d$ (resp.\ $n$) of the second (resp.\ third) equation, and we would have elements $(d,c,n,p\neq 0)$, with $c\in C$ arbitrary, in $TX_x^{\perp}$. We have a contradiction since $TX^{\perp}\subset TX\subset Q$. The only solution $p$ to the first equation is thus $p=0$. The map from $P$ to the dual of $TC$ induced by $s_{P\times C}$ is therefore injective, and so $\dim(P)\leq \dim(C)$. Since moreover $s(c, c')=0$ for $c,c'\in C$, the kernel of $s_{P\times C}$ is zero; that is, $s_{P\times C}$ is symplectic. As $C$ is isotropic for $s_{P\times C}$, we also have $\dim(C)\leq \dim(P)$; hence $\dim(C)=\dim(P)$.

Since $p=0$ if $(d,c,n,p)\in TX_x^{\perp}$, from the second and third equations, we get $d=0$, and $n$ is any element of the $\sigma_N$-orthogonal of $TZ_z$, $c\in C$ being arbitrary. We thus also get $C\times TZ_z=TX_x^{\perp}=C\times TZ_z^{\perp}$, where the first equality follows from the characteristic fibration and $TZ_z^{\perp}$ is the $\sigma_N$-orthogonal. We deduce that $TZ_z=TZ_z^{\perp}$, so that $Z$ is Lagrangian.
\end{proof}

Conversely, if we are given any $Z$, $N$, $D$, $C$, $P$, $\sigma_N$, $\sigma_D$, $\sigma_P,s$ with 
\begin{enumerate}
\item $\sigma_N$, $\sigma_D$ symplectic; 
\item $Z\subset N$, $Z$ of general type generating $N$ and $\sigma_N$-Lagrangian (so that $\dim(N)$ is even); 
\item $s(p,c)$ symplectic on $P\times C$ and $C$, $P$ maximal isotropic (so that $\dim(C)=\dim(P))$; 
\end{enumerate}
then $X:=D\times C\times Z$ is $\sigma$-coisotropic, with characteristic fibration $f\colon X\to D$. This is easy to check, using the preceding arguments.

\section{The irreducible hyperk\"ahler case}\label{SIHS}

\subsection{Some known examples of Lagrangian submanifolds}

Lagrangian submanifolds of IHS manifolds have been studied to some extent, especially when the dimension of the ambient IHS manifold is $4$. Many examples are exhibited in \cite{Voisin}. Voisin also shows that the codimension, in the component of the deformation space of an IHS manifold $M$, of the locus where the Lagrangian submanifold $X\subset M$ deforms together with $M$ is equal to the rank of the restriction map $H^2(M,\Q)\to H^2(X, \Q)$. In particular, a Lagrangian $\P^n\subset M$ deforms together with $M$ in codimension $1$.

For instance, the Hilbert square of a K3 surface $S$ of genus $2$ contains a Lagrangian $\P^2$; indeed, $S$ is a double covering of a plane, and the fibres of the covering define a plane in the Hilbert square. In the same spirit, an Enriques surface $S'$ has a K3 covering $S$, and so one obtains an embedding of $S'$ into $S^{[2]}$ with Lagrangian image (because $S'$ does not carry holomorphic $2$-forms).

Within $M$, a Lagrangian $\P^n$ is isolated. But there are also examples of dominating families of Lagrangian submanifolds. It is easy to see that any Lagrangian torus deforms in a covering family (and in fact is a fibre of a Lagrangian fibration, see \cite{Amerik} for dimension $4$ and \cite{HW} in general). One can also have a dominating family of Lagrangian submanifolds of general type.\footnote{E.~Macri informed us of a conjecture stating that actually every projective IHS manifold is covered by Lagrangian submanifolds.}  Indeed, the variety of lines $M=F(V)$ of a cubic fourfold $V$ is an IHS fourfold of K3 type, and the surface of lines $S_Y$ contained in a hyperplane section $Y$ of $V$ is Lagrangian. This results from the fact that $[\sigma]$ comes from a class $\eta\in H^{3,1}(V)$ via the ``Abel--Jacobi map'', that is, the composition of the pullback to the universal family of lines on $V$ with subsequent projection to $F(V)$ (see \textit{e.g.}\ \cite{AV}). One also computes that these surfaces are of general type, using the fact that these are zero-loci of sections of the restriction of the universal bundle from the Grassmannian to $M$. Notice that the surfaces $S_Y$ cover $F(V)$.

Another source of examples is described in \cite{Beauville}: the fixed locus of an antisymplectic involution of a holomorphic symplectic manifold is Lagrangian. This applies for instance to the double EPW sextic (a double covering of a sextic of a certain type in $\P^5$ which turns out to be an IHS fourfold of K3 type, see \textit{e.g.} \cite{OG}). The fixed locus of the covering involution is a smooth Lagrangian surface $\Sigma$ of general type which does not move in a family. However, E. Macri communicated to us that there is a covering family of Lagrangian surfaces in the cohomology class $2[\Sigma]$.

In conclusion, on IHS manifolds we know a considerable variety of Lagrangian submanifolds, with negative, zero, torsion or positive canonical class.
These submanifolds can move or be fixed. The situation is very different for Abelian varieties. See Section~\ref{SAV}.

\subsection{Lagrangian fibrations}

Let $f\colon M\to B$ be a Lagrangian fibration on an IHS manifold $M$, with discriminant hypersurface $\Delta\subset B$. Let $X$ be a ``vertical'' smooth subvariety; that is, $X=f^{-1}(S)$ for some $S\subsetneq B$.  Assume that $X$ is algebraically coisotropic. If Question~\ref{q1} has a positive answer, then $X$ must be a smooth fibre of $f$.  Indeed, a stronger statement holds under the following hypothesis (this is essentially due to Abugaliev \cite{Ab1}).

\begin{proposition}\label{renat}
In the notation above, assume that $\pi_1(S-(S\cap\Delta))$ surjects onto $\pi_1(B-\Delta)$. If\, $X=f^{-1}(S)$ is a vertical submanifold, then the leaves of the characteristic foliation of\, $X$ are Zariski dense in the fibres of $f$. In particular, $X$ cannot be algebraically coisotropic unless $S$ is a point.
\end{proposition}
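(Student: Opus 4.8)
The plan is to work fibrewise over the abelian-fibred locus and to reduce the Zariski density of the leaves to a monodromy-invariance statement. First I would recall that the smooth fibres $F_b=f^{-1}(b)$, $b\in B-\Delta$, are abelian varieties (smooth Lagrangian fibres of an IHS manifold), and that $T_{F_b}$ is a maximal isotropic subspace of $(T_M,\sigma)$. For $x\in X=f^{-1}(S)$ with $f(x)=b\in S^0:=S-(S\cap\Delta)$ we have $T_{F_b,x}\subseteq T_{X,x}$, so the $\sigma$-orthogonal $(T_{X,x})^{\perp}$, which is exactly the characteristic distribution $\mathcal F_x$, is contained in $(T_{F_b,x})^{\perp}=T_{F_b,x}$. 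Hence the leaves of $\mathcal F$ lie in the fibres of $f$. Moreover, the Lagrangian condition gives an isomorphism $\sigma\colon T_{F_b}\xrightarrow{\sim} N_{F_b/M}^{*}=f^{*}\Omega_{B,b}$ of trivial bundles on $F_b$, and a short computation ($v\in\mathcal F_x$ iff $\sigma(v)$ annihilates the image $T_{S,b}$ of $T_{X,x}$ in $N_{F_b/M}=T_{B,b}$) shows that $\mathcal F|_{F_b}$ is the constant subspace $V_b:=\sigma^{-1}(\mathrm{Ann}\,T_{S,b})\subseteq T_{F_b}$ of complex rank $c=\codim(X,M)=n-\dim S$. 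Thus $\mathcal F|_{F_b}$ is a translation-invariant (linear) foliation on the torus $F_b$, and $V_b\neq 0$ because $S\subsetneq B$ forces $c\geq 1$.

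On a torus the closure of a leaf of a linear foliation is a translate of a subtorus, so I would let $A_b\subseteq F_b$ be the smallest abelian subvariety with $T_{A_b}\supseteq V_b$; then the Zariski closure of a leaf is a translate of $A_b$, and the leaves are Zariski dense in $F_b$ precisely when $A_b=F_b$. As $b$ varies over $S^0$ the $A_b$ are translates of the fibres of an abelian subscheme of the abelian scheme $f\colon f^{-1}(S^0)\to S^0$ (they are the algebraic hulls of the leaves of a globally defined foliation, hence vary algebraically), so $W_b:=H_1(A_b,\Q)\subseteq H_1(F_b,\Q)$ is a sub-local-system of $R_1 f_*\Q$ over $S^0$ and is therefore invariant under the monodromy action of $\pi_1(S^0)$. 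Working with $W_b$ rather than with $V_b$ itself is the point: it sidesteps the fact that the holomorphic subbundle $V_b$ need not be flat for the Gauss--Manin connection, while producing a genuine monodromy-invariant \emph{rational} subspace.

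It then remains to combine two inputs. By hypothesis $\pi_1(S^0)\to \pi_1(B-\Delta)$ is surjective, so $W_b$ is invariant under the full monodromy group of $R_1 f_*\Q$ on $B-\Delta$. I would next invoke the irreducibility of this monodromy representation for a Lagrangian fibration of an IHS manifold, which is the essential input borrowed from \cite{Ab1}: the local system $R^1 f_*\Q|_{B-\Delta}$, equivalently $R_1 f_*\Q|_{B-\Delta}$, admits no nonzero proper rational invariant subspace. Since $W_b\supseteq V_b\neq 0$, irreducibility forces $W_b=H_1(F_b,\Q)$, that is $A_b=F_b$, which proves that the leaves are Zariski dense in the fibres. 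Finally, if $X$ is moreover algebraically coisotropic, each leaf is a closed algebraic subvariety equal to its own Zariski closure, of dimension $c=n-\dim S$; Zariski density in the $n$-dimensional fibre $F_b$ then forces $n-\dim S=n$, so $\dim S=0$ and $S$ is a point.

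The main obstacle is the irreducibility of the monodromy of the Lagrangian fibration, which is the one genuinely global and non-formal ingredient and which I would cite from \cite{Ab1}; the remaining delicate point, namely passing from the holomorphic (and possibly non-flat) foliation subbundle $V_b$ to a bona fide monodromy-invariant rational subspace, is resolved by replacing $V_b$ with the homology $W_b$ of the algebraic hull $A_b$ of the leaves.
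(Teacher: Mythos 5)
Your overall strategy coincides with the paper's own (which is itself a sketch referring to \cite{Ab1}): show the leaves are tangent to the fibres and linear on the torus fibres, take the subtorus closures $A_b$ of the leaves, observe that their homology gives a $\pi_1(S-(S\cap\Delta))$-invariant sub-local system of $R_1f_*\Q$, use the surjectivity hypothesis to upgrade this to $\pi_1(B-\Delta)$-invariance, and conclude by a ``big monodromy'' statement. Your fibrewise analysis (the identification of the characteristic distribution with $\sigma^{-1}(\mathrm{Ann}\,T_{S,b})$, its translation invariance on the abelian fibre, its nonvanishing when $S\subsetneq B$) is correct and in fact more detailed than the paper's, and the final deduction for the algebraically coisotropic case is fine. (A minor point both you and the paper gloss over: one should shrink to the dense open subset of $S-(S\cap\Delta)$ where $\dim A_b$ is constant, so that the $W_b$ genuinely form a local system; this does not affect the argument.)

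The gap is precisely in the step you flag as the crux: the irreducibility of the monodromy representation on $R^1f_*\Q|_{B-\Delta}$ is not a statement you can simply cite from \cite{Ab1}. What \cite{Ab1} uses --- and what the paper quotes, via \cite{Ogu}, combining results of Voisin and Matsushita --- lives one level up, in $H^2$: the image of the restriction map $H^2(M,\Q)\to H^2(F,\Q)$ is one-dimensional, spanned by an ample class; by Deligne's global invariant cycle theorem this image is exactly the $\pi_1(B-\Delta)$-invariant part of $H^2(F,\Q)$. The paper closes the argument at that level: a monodromy-invariant family of proper subtori $A_b\subset F_b$ produces a nef non-ample invariant class in $H^2(F,\Q)$ (the pullback of a polarization from $F_b/A_b$), contradicting the fact that the unique invariant class is ample. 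Your $H^1$-irreducibility claim is actually true, but it is strictly stronger than the quoted rank-one statement at face value, and deriving it requires further input: one needs Deligne semisimplicity (or the theorem of the fixed part) to know that an invariant subspace $W\subset H_1(F_b,\Q)$ underlies a sub-variation of Hodge structure; then polarization positivity gives $W\cap W^{\perp}=0$ for the ample form $h$, so $h$ splits as $h_W\oplus h_{W^{\perp}}$, and the two non-degenerate pieces give two linearly independent invariant classes in $H^2(F,\Q)=\wedge^2H^1(F,\Q)$, contradicting one-dimensionality. So your proof is salvageable, but as written it outsources its only non-formal ingredient to a reference where it does not appear in that form; either supply the derivation just sketched, or descend to the $H^2$-level contradiction, which is the route the paper takes.
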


\begin{proof}
We briefly sketch the proof and refer to \cite{Ab1} for details (he treats the case $\codim(X)=1$, but the general one is analogous). It is easy to see that the leaves are tangent to the fibres of $f$ and the closures of the leaves in the smooth fibres are subtori. When these subtori are proper, this gives a fibration in subtori on a smooth fibre $F\subset X$, and this family of subtori is invariant under the monodromy $\pi_1(S-(S\cap\Delta))$. By a remark of Oguiso (who combined results by Voisin and Matsushita) from \cite{Ogu}, the image of the restriction map $H^2(M,\Q)\to H^2(F,\Q)$ is $1$-dimensional; hence, up to proportionality, only one class in $H^2(F,\Q)$ can be invariant under $\pi_1(B-\Delta)$. This is the restriction of an ample class on $M$, itself ample. On the other hand, a fibration of $F$ into proper subtori as above provides a nef non-ample class in $H^2(F,\Q)$, invariant under $\pi_1(S-(S\cap\Delta))$ (the pullback of an ample class on the base). Therefore, the closure of a general leaf must be the whole fibre of $f$.
\end{proof}

\subsection{Nefness and non-uniruledness}\label{r2}
A recent observation by Abugaliev and Pereira shows that a smooth non-nef hypersurface $D$ in a holomorphic symplectic $M$ is uniruled (see \cite[Theorem~8.4]{Ab2}); if $M$ is an IHS manifold, the Beauville--Bogomolov square of such a $D$ is negative (see \cite[Corollary~8.5]{Ab2}). In general, for a submanifold $X\subset M$ of higher codimension, the non-nefness of $K_X$ implies the existence of rational curves by bend-and-break. It would be interesting to see whether for algebraically coisotropic $X$, the non-nefness of $K_X$ implies uniruledness as well.

\section{Lagrangian submanifolds on Abelian varieties}\label{SAV}

There are obvious ``linear'' examples of Lagrangian submanifolds (subtori), as well as other trivial ones (curves in surfaces and products of such). In addition to Schoen's Lagrangian surfaces in certain simple Abelian fourfolds (see \cite{S}), which we have mentioned in the introduction, some non-trivial examples have been constructed using correspondences between K3 surfaces (see \cite{Bo}) or Galois closures of certain coverings (see \cite{BPS}). In these examples the Abelian variety is not simple. However, in \cite{BPS} the authors show that the square of a sufficiently general $(1,2)$-polarized Abelian surface admits a Lagrangian surface of general type, not fibered in curves (see \cite[Theorem 0.2]{BPS}). The following problem is interesting but probably difficult and out of scope of this note.

\begin{problem}
Find examples (if any) of Lagrangian submanifolds in simple Abelian varieties of dimension at least $6$.
\end{problem}

\subsection{Hodge-general Abelian varieties}

The purpose of this section is to remark that a sufficiently general Abelian variety $M$ does not have Lagrangian submanifolds. The genericity condition is stronger than simpleness and is of Hodge-theoretic nature  (see Definition~\ref{dgeneral}).

Notice the sharp contrast with the case of projective IHS manifolds. Indeed, we know that in some maximal families of projective IHS, every manifold is covered by Lagrangian submanifolds. The reason is as follows: on a Lagrangian submanifold of a sufficiently general manifold (Abelian or IHS), all holomorphic 2-forms must vanish, and on an Abelian variety, there are too many $2$-forms for this (see Corollary~\ref{chodge} and its proof).

We now give the details (see \textit{e.g.} \cite[Section~17]{BL} for more).

Let $M$ be an Abelian variety of dimension $g$ equipped with a polarization and $V$ its first cohomology $H^1(X, \Q)$ equipped with its polarized Hodge structure. We view the polarization as a skew form $\phi$ on $V$. There is a natural representation $h\colon  S^1\to \SL(V_\R)$ associated to the Hodge structure on $V$; here $S^1$ is the unit circle acting with weight $1$ on $H^{1,0}$ and $-1$ on $H^{0,1}$.

\begin{definition}\label{hodgegroup}
The Hodge group $\Hg(M)$ is the smallest algebraic subgroup of $\SL(V)$ defined over $\Q$ such that its group of $\R$-points contains $h(S^1)$.
\end{definition}

The Hodge group is always an algebraic subgroup of $\Sp(V, \phi))$ (see \cite[Proposition 17.3.2]{BL}). We say that $M$ is Hodge general when $\Hg(M)$ is as large as possible.

\begin{definition}\label{dgeneral}
A polarized Abelian variety $M$ is Hodge general if  $\Hg(M)=\Sp(V, \phi)$.
\end{definition}

By \cite[Proposition 17.4.2]{BL}, $M$ is Hodge general outside of a countable union of strict analytic subsets of the Siegel upper half-plane parametrizing $n$-dimensional Abelian varieties with a polarization $H$ of a given type (see \cite[Section~8.1]{BL}; this coarse version is sufficient for our purpose).  Moreover, the Jacobian of a sufficiently general curve is Hodge general. Many explicit examples of Hodge-general Jacobians, even for particular types of curves (\textit{e.g.}\ hyperelliptic curves) have been constructed by Zarhin (see for instance \cite{Zar}).

A Hodge-general Abelian variety is simple, and its Picard number is $1$. This immediately follows from the following more precise statement.

\begin{proposition}\label{hodge}
Let $(M,H)$ be a polarized Abelian variety. Denote by $H^{2}(M,\Q)_{\prim}$ the primitive part of the cohomology, that is, the orthogonal complement to $H\in H^2(X,\Q)$ for the pairing induced by the polarization. If $(M,H)$ is Hodge general, then $\Q H$ and $H^{2}(M,\Q)_{\prim}$ are the only Hodge substructures in $H^2(M,\Q)$.
\end{proposition}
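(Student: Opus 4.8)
The plan is to translate the statement into representation theory of the symplectic group and then invoke the standard decomposition of the exterior square of the standard representation. Since $M$ is an Abelian variety, $H^\bullet(M,\Q)=\wedge^\bullet V$, so $H^2(M,\Q)=\wedge^2 V$ as polarized Hodge structures, pure of weight $2$. Under the identification induced by $\phi$, the polarization class $H$ corresponds to the $\Sp(V,\phi)$-invariant element $\omega\in\wedge^2 V$, and $H^2(M,\Q)_{\prim}$ is by definition its orthogonal complement for the pairing induced by the polarization. So the entire question concerns the $\Hg(M)$-submodule structure of $\wedge^2 V$.

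First I would set up the dictionary between sub-Hodge structures and $\Hg(M)$-submodules. A rational subspace $W\subseteq\wedge^2 V$ is a sub-Hodge structure precisely when $W_\C=\bigoplus_{p+q=2}(W_\C\cap(\wedge^2 V)^{p,q})$; since $h(z)$ acts on $(\wedge^2 V)^{2,0}$, $(\wedge^2 V)^{1,1}$, $(\wedge^2 V)^{0,2}$ through the distinct characters $z^2$, $1$, $z^{-2}$, this is equivalent to $W_\C$ being stable under $h(S^1)$. Here it is the Hodge group, not the full Mumford--Tate group, that is relevant: the weight is constant on the pure piece $\wedge^2 V$, so the scaling factor of $\mathbb{S}$ acts by a single scalar and carries no information. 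The stabilizer of a $\Q$-rational $W$ inside $\mathrm{GL}(V)$ is a $\Q$-algebraic subgroup; if it contains $h(S^1)$ it contains the smallest such $\Q$-group, which is $\Hg(M)$ by definition. Thus the rational sub-Hodge structures of $\wedge^2 V$ are exactly its $\Hg(M)$-stable $\Q$-subspaces, and conversely every $\Hg(M)$-submodule is $h(S^1)$-stable, hence a sub-Hodge structure.

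Next, invoking Hodge generality, $\Hg(M)=\Sp(V,\phi)$, so I am reduced to decomposing $\wedge^2 V$ as a representation of $\Sp(V,\phi)$, with $V$ the standard $2g$-dimensional representation. This is classical: contraction with $\phi$ gives an $\Sp(V,\phi)$-equivariant surjection $\wedge^2 V\to\Q$, and one has the splitting $\wedge^2 V=\Q\omega\oplus(\wedge^2 V)_0$ into the trivial summand $\Q\omega$ and the kernel $(\wedge^2 V)_0$, which is the irreducible fundamental representation of highest weight $\omega_2$ for $g\ge 2$ (for $g=1$ one has $\wedge^2 V=\Q\omega$ and the primitive part is zero, a degenerate but harmless case). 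Consequently the only $\Sp(V,\phi)$-submodules of $\wedge^2 V$ are $0$, $\Q\omega$, $(\wedge^2 V)_0$ and $\wedge^2 V$. Identifying $\Q\omega=\Q H$ (the invariant line is spanned by the $\phi$-invariant class $H$) and $(\wedge^2 V)_0=H^2(M,\Q)_{\prim}$ (both being the $\Sp(V,\phi)$-stable, hence orthogonal, codimension-one complement of $\Q H$ for the invariant pairing, $H$ being non-isotropic), the proposition follows: apart from $0$ and the whole space, the only Hodge substructures are $\Q H$ and $H^2(M,\Q)_{\prim}$.

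The conceptual heart, and the only point requiring genuine care, is the equivalence ``sub-Hodge structure $\Leftrightarrow$ $\Hg(M)$-submodule,'' together with the observation that the Hodge group (rather than the full Mumford--Tate group) already detects all sub-Hodge structures inside a single pure tensor construction. The representation theory of $\Sp(V,\phi)$ on $\wedge^2 V$ is entirely standard, as is the matching of the two summands with $\Q H$ and the primitive cohomology; there the only thing to monitor is the degenerate elliptic-curve case $g=1$.
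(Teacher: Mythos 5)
Your proof is correct and follows essentially the same route as the paper's: reduce, via the identity $H^2(M,\Q)=\wedge^2 H^1(M,\Q)$ and the equivalence between sub-Hodge structures and $\Hg(M)=\Sp(V,\phi)$-submodules, to the classical fact that the kernel of the contraction $\wedge^2 V\to\Q$ is an irreducible $\Sp(V,\phi)$-representation (the paper cites \cite[Section~17, p.~260]{FH} for exactly this). The only difference is one of exposition: you spell out the dictionary between sub-Hodge structures and Hodge-group submodules, which the paper treats as implicit.
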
 

\begin{proof}
Since $H^2(M,\Q)=\wedge^2H^1(M,\Q)$ as polarized Hodge structures, one only needs to check that $H^{2}(M,\Q)_{\prim}$ is an irreducible $\Sp(M,H)$-module. This follows for example from \cite[Section~17, p.~260]{FH}, which asserts that for the standard representation $W$ of the symplectic group, the kernel of the natural contraction map $\wedge^kW\to \wedge^{k-2}W$ is irreducible (take $k=2$). \end{proof}

\begin{corollary}\label{chodge}
Let $M$ be a Hodge-general Abelian variety $($for some polarization $H)$ and $\sigma$ a non-zero holomorphic $2$-form on $M$. Let $X$ be an irreducible subvariety of $M$ of dimension at least $2$ and $j\colon  X'\to M$ the natural map of a desingularization of\, $X$ to $M$. Then the holomorphic 2-form $j^*\sigma$ on $X'$ is non-zero. In particular, $M$ does not contain any Lagrangian subvariety when $\dim(M)>2$.
\end{corollary}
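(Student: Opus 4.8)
The plan is to argue by contradiction through the cohomology class of $\sigma$, using that $j^*$ is a morphism of Hodge structures and that, by Proposition~\ref{hodge}, $H^2(M,\Q)$ carries almost no Hodge substructures. Suppose $j^*\sigma=0$. Since $X'$ is smooth projective, every holomorphic $2$-form on it is harmonic, so the class map $H^0(X',\Omega^2_{X'})\to H^2(X',\C)$ is injective; hence $j^*\sigma=0$ is equivalent to $[\sigma]\in\ker\bigl(j^*\colon H^2(M,\C)\to H^2(X',\C)\bigr)$. This kernel is the complexification of $K:=\ker\bigl(j^*\colon H^2(M,\Q)\to H^2(X',\Q)\bigr)$, which is a rational Hodge substructure since $j^*$ is a morphism of Hodge structures. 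As $[\sigma]$ is a nonzero $(2,0)$-class lying in $K_\C$, the substructure $K$ has nonzero $(2,0)$-part.

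Next I would combine this with Proposition~\ref{hodge}, which forces $K$ to be one of $0$, $\Q H$, $H^2(M,\Q)_{\prim}$, or $H^2(M,\Q)$. First note that $H^{2,0}(M)\subset H^2(M,\C)_{\prim}$: for $\alpha\in H^{2,0}(M)$ the product $\alpha\cup H^{g-1}$ has Hodge type $(g+1,g-1)$ and so vanishes in $H^{2g}(M)=H^{g,g}(M)$, meaning $\alpha$ is primitive. Since $H$ is of type $(1,1)$, the substructures $0$ and $\Q H$ have trivial $(2,0)$-part, so the only candidates for $K$ with nonzero $(2,0)$-part are $H^2(M,\Q)_{\prim}$ and $H^2(M,\Q)$, each of which contains all of $H^{2,0}(M)$. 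Either way, $j^*$ must annihilate \emph{every} holomorphic $2$-form on $M$.

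The step I expect to require the most care is converting this into a contradiction with $\dim X\ge 2$; concretely, it rules out the a priori possibility $K=H^2(M,\Q)_{\prim}$. Here I would use that $M$ is a torus, so every holomorphic $2$-form is translation-invariant and $H^{2,0}(M)=\wedge^2 H^0(M,\Omega^1_M)=\wedge^2(T_0M)^*$ is the \emph{full} space of alternating forms on $T_0M\cong\C^g$. At a general point $x'\in X'$ the differential $dj_{x'}\colon T_{x'}X'\to T_0M$, computed after translating $j(x')$ to the origin, is injective because $j$ is generically finite onto its $d$-dimensional image; put $U=dj_{x'}(T_{x'}X')$, so $\dim U=d=\dim X$. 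The vanishing of $j^*\tau$ for all $\tau$ says that every alternating form on $T_0M$ restricts to $0$ on $U$, and since two linearly independent vectors are always separated by some element of $\wedge^2(T_0M)^*$, this forces $\dim U\le 1$, i.e.\ $\dim X\le 1$, contradicting $\dim X\ge 2$. Hence $j^*\sigma\neq 0$.

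For the final assertion, write $\dim(M)=2n$; the hypothesis $\dim(M)>2$ gives $n\ge 2$. A Lagrangian subvariety $X$ for a symplectic form $\sigma$ on $M$ then has dimension $n\ge 2$ and satisfies $\sigma|_X=0$, so any desingularization $j\colon X'\to M$ has $j^*\sigma=0$. This contradicts the first part applied to the nonzero holomorphic $2$-form $\sigma$, and therefore $M$ contains no Lagrangian subvariety.
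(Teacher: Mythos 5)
Your proof is correct and follows essentially the same route as the paper: the kernel of $j^*$ on $H^2(M,\Q)$ is a Hodge substructure, Proposition~\ref{hodge} forces it (once it contains the $(2,0)$-class $[\sigma]$) to contain all of $H^{2,0}(M)$, and the translation-invariance of $2$-forms on a torus then contradicts $\dim X\geq 2$. The extra details you supply --- injectivity of the class map on holomorphic forms, primitivity of $(2,0)$-classes, and handling the case where the kernel is all of $H^2(M,\Q)$ --- are routine verifications that the paper leaves implicit.
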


\begin{proof} 
  The natural morphism $j^*\colon H^{2}(M,\Q)\to H^{2}(X',\Q)$ is a morphism of Hodge structures, so its kernel is a Hodge substructure of $H^{2}(M,\Q)$. If $M$ is Hodge general, there are only two Hodge substructures in $H^{2}(M,\Q)$. So in this case, if $j^*\sigma=0$, this substructure must be $H^{2}(M,\Q)_{\prim}$, and so all holomorphic 2-forms on $M$ vanish on $X'$.  This is not possible if $d=\dim(X)\geq 2$. Indeed, if $x\in X$ is a smooth point and $(z_1,\dots,z_n)$ are global coordinates on $M$ (\textit{i.e.} linear on its universal cover) such that the tangent space of $X$ at $x$ is defined by $dz_{d+1}=\dots=dz_n=0$, the global $2$-form $dz_1\wedge dz_2$ on $M$ does not vanish on $X$.
\end{proof}

\begin{remark}\label{maps}
The same argument proves that if $f\colon  Y\to M$ is a morphism to a Hodge general $M$, then $f^*\sigma\neq 0$ unless $\dim(f(Y))\leq 1$.
\end{remark}

\subsection{Products of simple Abelian varieties}\label{SPAV}

Now consider a product $M=\Pi_iM_i$ of Abelian varieties, where the factors $M_i$ are pairwise non-isogeneous, and each $M_i=S_i^{d_i}$ is a power of a simple Abelian variety $S_i$. We would like to ask the following questions:
\begin{enumerate}
\item\label{spav-1} For which $d_i$ and $\dim(S_i)$ such that $d_i \dim(S_i)$ is even can $M_i$ contain a Lagrangian subvariety (for some symplectic $\sigma$)? 

\item\label{spav-2} If none of the $M_i$ contains a Lagrangian subvariety, can $M$ contain some?
\end{enumerate}

As a variant, one may assume $S_i$ to be Hodge general (for a certain polarization) instead of just simple.

Examples for~\eqref{spav-1} indeed exist (see \cite{BPS}) if $\dim(S_i)=2$ and $d_i=2$, also with Hodge general $S_i$ equipped a $(1,2)$-polarization. It is shown in \cite{BPS} that such Abelian surfaces admit a map $\gamma\colon S\dasharrow F$ of degree $3$ to a Hirzebruch surface $F$. The component $T$ of $S\times_FS\subset S\times S$ residual to the diagonal is smooth and Lagrangian for a suitable symplectic $2$-form on $S\times S$.

Let us note that $T$ can also be described as arising from a rational (hence Lagrangian) surface $F'$ in the generalized Kummer IHS $K^{[2]}(S)$. Indeed,  let us consider the Zariski closure $T'$ in $S^3$ of the set of pairwise distinct triples $(p,q,r)$ which form a fibre of $\gamma\colon S\dasharrow F$. By definition, $T'$ is invariant by the symmetric group $S_3$ acting on $S^3$ by permutation of the factors. Thus $T'/S_3\cong F'\subset S^3/S_3$ is Lagrangian in $S^3/S_3$, and so is $T'$ in $S^3$. The image of $T'/S_3$ is contained in a fibre of $S^3/S_3\to S$ deduced from the addition on $S$: indeed, since $F$ is rational, the sum in $S$ of such a triple is constant.  Since $K^{[2]}(S)$ is birational to a fibre of the Albanese map of $S^3/S_3$, we get the claim.

\subsection{Relation to fundamental groups}

This is also related to the questions about the fundamental groups of $d$-dimensional submanifolds $X\subset M$ when $M$ is a $2d$-dimensional simple Abelian variety with $d\geq 2$.

Recall that if $a_X\colon X\to A_X$ is the Albanese map of a complex projective manifold, it induces a natural map $H^2(A_X,\C)=\wedge^2H^1(X,\C)\to H^2(X,\C)$. The kernel of this map describes the nilpotent completion\footnote{Said differently, this is the tower of torsion-free nilpotent quotients of $\pi_1(X)$.} of $\pi_1(X)$. Examples of $X\subset M$ with torsion-free nilpotent but non-Abelian $\pi_1(X)$ have been produced this way (see \cite{SV, Ca95}). These examples all have $A_X$ not simple. Moreover, the kernels are then contained in $H^{1,1}(M)$. Abelian fourfolds $M$ containing an $X$ such that there is a non-zero holomorphic $2$-form in this kernel and such that $X$ is not fibered over a curve of general type,\footnote{This property is remarkable because $X$ admits a fibration over a curve of genus $g \geq 2$ if and only if a decomposable $2$-form on $A_X$ vanishes on the image of $X$, by the Castelnuovo--de Franchis theorem.}
are constructed in \cite{Bo,BPS,S}. The following is suggested by \cite[Remarque~1.5]{Ca95}.

\begin{corollary}\label{c'hodge}
Let $M$ be a general Abelian variety, and let $h\colon X\to M$ be a holomorphic map. If $\dim(h(X))\geq 2$, $h^*\colon H^2(M,\C)\to H^2(X,\C)$ is injective, and either
\begin{enumerate}
\item\label{c'hodge-1} $q(X)=\dim(M)$, and any torsion-free nilpotent quotient of $\pi_1(X)$ is equal to $H_1(X,\Z)/\Torsion$; or 
\item\label{c'hodge-2}  $q(X)>\dim(M)$, and $A_X$ is not simple.
\end{enumerate}
\end{corollary}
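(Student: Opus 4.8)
The plan is to extract the statement from the surrounding Hodge-theoretic machinery established in Corollary~\ref{chodge} and Proposition~\ref{hodge}, and to apply it to the Albanese variety of $X$ rather than to $M$ directly. The essential input is that a sufficiently general Abelian variety is Hodge general, hence (by Proposition~\ref{hodge}) its degree-$2$ cohomology has exactly two Hodge substructures, and that by the argument of Corollary~\ref{chodge} the kernel of $j^*\colon H^2(M,\Q)\to H^2(X',\Q)$ is a Hodge substructure which cannot contain any $(2,0)$-class once $\dim(h(X))\geq 2$. First I would dispose of the injectivity assertion: the map $h^*\colon H^2(M,\C)\to H^2(X,\C)$ is a morphism of Hodge structures, so its kernel is one of the two substructures from Proposition~\ref{hodge}; by Remark~\ref{maps} (or directly by the $dz_1\wedge dz_2$ argument) it does not contain all of $H^{2,0}$ when $\dim(h(X))\geq 2$, so the kernel is neither $H^2(M,\Q)_{\prim}$ nor $\Q H$, forcing it to be zero.

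Next I would reduce everything to the Albanese map. The map $h\colon X\to M$ factors through the Albanese $a_X\colon X\to A_X$ followed by a homomorphism (up to translation) $u\colon A_X\to M$, and the nilpotent completion of $\pi_1(X)$ is controlled, as recalled just before the statement, by the kernel of the natural map $\wedge^2 H^1(X,\C)=H^2(A_X,\C)\to H^2(X,\C)$. The dichotomy~\eqref{c'hodge-1}/\eqref{c'hodge-2} is a dichotomy on $q(X)=\dim A_X$ versus $\dim M$. If $q(X)=\dim(M)$, then $u$ is an isogeny, so $A_X$ is itself general, and one must show the kernel above vanishes; this is again the injectivity of the Albanese-pullback on $H^2$, which follows from the same Hodge-substructure argument applied to $A_X$ in place of $M$, giving that any torsion-free nilpotent quotient of $\pi_1(X)$ coincides with $H_1(X,\Z)/\Torsion$. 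If instead $q(X)>\dim(M)$, then $u$ has positive-dimensional kernel, so $A_X$ maps onto $M$ with nontrivial kernel and therefore cannot be simple, which is~\eqref{c'hodge-2}.

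The one genuine subtlety, and the step I expect to be the main obstacle, is controlling the \emph{holomorphic} $2$-forms in the kernel rather than merely a single form, and matching this to the cited \cite[Remarque~1.5]{Ca95}. Concretely, I must argue that under the injectivity of $h^*$ there is no nonzero $(2,0)$-class in the kernel of $H^2(A_X,\C)\to H^2(X,\C)$, which is what pins down the nilpotent completion in case~\eqref{c'hodge-1}; the point is that such a class would be a decomposable or non-decomposable holomorphic $2$-form vanishing on $X$, and the pointwise computation of Corollary~\ref{chodge} (choosing linear coordinates adapted to $T_{X,x}$) rules this out precisely when $\dim(h(X))\geq 2$. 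The remaining verifications --- that the factorization through $A_X$ is compatible with the Hodge-structure morphisms, and that a positive-dimensional kernel of $u$ genuinely witnesses non-simplicity of $A_X$ --- are formal. Thus the corollary is essentially Corollary~\ref{chodge} transported to $A_X$, combined with the standard description of the nilpotent tower of $\pi_1(X)$ via the Albanese.
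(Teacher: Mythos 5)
Your proof is correct and essentially identical to the paper's: injectivity of $h^*$ via Proposition~\ref{hodge} and Remark~\ref{maps}, the Albanese factorization $h=u\circ a_X$, the isogeny case giving injectivity of $a_X^*$ and hence (by \cite[Th\'eor\`eme~3.10]{Ca95}, which is the ``standard description'' of the nilpotent tower you invoke) the conclusion of case~(\ref{c'hodge-1}), and the positive-dimensional kernel of $u$ giving non-simplicity of $A_X$ in case~(\ref{c'hodge-2}). One small slip, which the paper also leaves implicit: the $(2,0)$-form argument rules out the kernel of $h^*$ containing $H^{2}(M,\Q)_{\prim}$, but it cannot rule out the kernel being $\Q H$ (which contains no $(2,0)$-classes); for that one needs $h^*H\neq 0$, immediate from the ampleness of $H$ since $\dim(h(X))\geq 1$.
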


\begin{proof}
  Notice that $h(X)$ generates $M$ since $M$ is simple. The injectivity claim is Remark~\ref{maps}. Assume that $q(X)=\dim(M)$. We then have an isogeny: $u\colon A_X\to M$ such that $h=u\circ a_X$; we thus get the injectivity of $a_X^*\colon H^2(A_X,\C)\to H^2(X,\C)$, and so the conclusion in case~\eqref{c'hodge-1}, by \cite[Th\'eor\`eme 3.10]{Ca95}. In case~\eqref{c'hodge-2}, we still have a factorization $h=u\circ a_X$ as before, but then the kernel of $u\colon A_X\to M$ is a positive-dimensional Abelian subvariety of $A_X$, which is therefore not simple.
\end{proof}

More generally, we ask the following. 

\begin{question}\label{qab}
  Let $M$ be a simple Abelian variety of even dimension $2n>2$. Are there restrictions on the fundamental groups of its smooth submanifolds $Y$ of dimension $n$?
\end{question}

When $\dim(Y)>n$, the fundamental group of $Y$ is Abelian (isomorphic to $\pi_1(A_Y)$), by \cite{So}. When $\dim(Y)<n$, there does not seem to be any restriction on $\pi_1(Y)$ (take the inverse image of a suitable $Z$ by a finite projection $M\to \mathbb{P}^{2n}$). In middle dimension, no example of unusual $\pi_1(Y)$ seems to be known if $M=A_Y$ is simple,  possibly due to the fact that most of them are not computed. If $M$ is simple, the Sommese--Van de Ven construction, based on the sporadic Horrocks--Mumford Abelian surfaces, gives examples in dimension $4$, but with $M\neq A_Y$.

\section{A non-projective example}\label{SNP}

\begin{proposition}\label{pnot root}
  There exist $2$-dimensional $($non-projective\/$)$ complex tori $T$ admitting an automorphism $g$ such that $g^*(\sigma)=\lambda. \sigma$
  for any symplectic $2$-form $\sigma$ on $T$, with $\lambda$ a complex number not a root of unity.
\end{proposition}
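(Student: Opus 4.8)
The plan is to realize $T$ arithmetically from a quartic number field. Let $K$ be a totally imaginary quartic field whose Galois closure $L$ has group $\operatorname{Gal}(L/\Q)=S_4$ (such fields are abundant and easy to write down, e.g.\ a generic quartic with no real roots and no quadratic subfield). Since $K$ has exactly two pairs of complex embeddings $\{\sigma_1,\bar\sigma_1,\sigma_2,\bar\sigma_2\}$, the $\R$-algebra $K\otimes_\Q\R$ is naturally $\C^2$ via $(\sigma_1,\sigma_2)$; taking $\Lambda=\mathcal{O}_K$, this equips $T:=(K\otimes_\Q\R)/\mathcal{O}_K\cong\C^2/\mathcal{O}_K$ with the structure of a $2$-dimensional complex torus. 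By the Dirichlet unit theorem the unit rank of $K$ is $r_1+r_2-1=0+2-1=1$, so there is a unit $u\in\mathcal{O}_K^\times$ of infinite order; multiplication by $u$ is a lattice automorphism commuting with the complex structure, hence defines $g\in\Aut(T)$. As $h^{2,0}(T)=1$, every symplectic form $\sigma$ is a scalar multiple of a fixed one, and $g$ acts on $H^{2,0}(T)=\wedge^2(\C^2)^*$ by the scalar $\lambda:=\sigma_1(u)\sigma_2(u)$ (pullback of the top form is multiplication by the determinant $\det(\operatorname{diag}(\sigma_1(u),\sigma_2(u)))$). Thus $g^*\sigma=\lambda\sigma$ for every such $\sigma$, and it remains to check that $T$ is non-projective and that $\lambda$ is not a root of unity.

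For non-projectivity, I would argue by contradiction: if $T$ were an abelian surface, then $K=\Q[g]$ would embed into $\operatorname{End}^0(T)$, and since $[K:\Q]=4=2\dim T$, the Albert classification forces $T$ to be a simple abelian surface of CM type with $K$ a CM field, that is, a totally imaginary quadratic extension of a real quadratic field. But an $S_4$-field admits no intermediate subfield at all (the point stabiliser $S_3<S_4$ is maximal), so $K$ has no quadratic subfield and cannot be CM. This contradiction shows that $T$ is not projective.

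Finally, to see that $\lambda$ is not a root of unity, note first that $N_{K/\Q}(u)=\pm1$ gives $|\sigma_1(u)|^2|\sigma_2(u)|^2=1$, whence $|\lambda|=|\sigma_1(u)\sigma_2(u)|=1$; so $\lambda$ lies on the unit circle and Kronecker's theorem is the right tool. Writing $\rho=|\sigma_1(u)|$, we have $\rho\neq 1$ (otherwise $u$ would be a root of unity, contradicting its infinite order), and $\rho^2=\sigma_1(u)\bar\sigma_1(u)$ is one of the six eigenvalues $\{\sigma_i(u)\sigma_j(u)\}_{i<j}$ of $\wedge^2 g$ on $H^2(T)$. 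Because $\operatorname{Gal}(L/\Q)=S_4$ acts transitively on the six unordered pairs of embeddings, $\rho^2$ is a Galois conjugate of $\lambda=\sigma_1(u)\sigma_2(u)$; since $\rho^2$ is a positive real number different from $1$, it lies off the unit circle. Hence not all conjugates of $\lambda$ have absolute value $1$, so by Kronecker's theorem $\lambda$ is not a root of unity.

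The main obstacle is the non-projectivity step, where one must invoke the Albert/CM classification of endomorphism algebras of abelian surfaces to turn ``$K$ is not CM'' into ``$T$ is not algebraic''. The supporting points — the existence of a totally imaginary $S_4$-quartic field and the $S_4$-transitivity on pairs that feeds Kronecker's theorem — are elementary once $K$ is fixed, and an explicit defining polynomial for $K$ can be exhibited to make the example completely concrete.
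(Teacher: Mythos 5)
Your construction is at bottom the same as the paper's, just written in number-theoretic rather than explicit terms: the paper takes $K=\Q[X]/(X^4+X+1)$ (a totally imaginary quartic field with $S_4$ Galois closure), uses the order $\Z[\alpha]$ as the lattice, and takes as its unit the root $\alpha$ itself (a unit of $\Z[\alpha]$ because the constant term of $X^4+X+1$ is $1$), so that $\lambda=ab$; the heart of its proof is exactly your pair-transitivity argument, namely that some element of $S_4$ carries the pair $\{a,b\}$ to $\{a,\bar a\}$, so $\lambda$ has the positive real Galois conjugate $|a|^2\neq 1$ and therefore cannot be a root of unity. Your abstraction (arbitrary totally imaginary quartic field with two-transitive closure, a unit of infinite order supplied by Dirichlet) is correct on this main point, and can be made concrete by citing $X^4+X+1$. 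One structural difference: the paper does not prove non-projectivity inside the proof at all; it notes in a remark that $|\lambda|=1$ and that a \emph{projective} torus cannot have an automorphism acting on $H^{2,0}$ by a non-root of unity (citing Ueno), so non-projectivity is automatic once $\lambda$ is not a root of unity.

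The one genuine flaw is in your extra non-projectivity step. It is not true that a quartic field $K\subset \operatorname{End}^0(T)$ of an abelian surface $T$ forces $T$ to be simple of CM type with $K$ a CM field: if $T$ is isogenous to $E\times E$ with $E$ an elliptic curve having CM by an imaginary quadratic field $F$, then $\operatorname{End}^0(T)=M_2(F)$ contains every quadratic extension of $F$, and such extensions need not be CM fields (for instance $\Q\bigl(\sqrt{1+i}\,\bigr)\subset M_2(\Q(i))$ is a totally imaginary quartic field whose unique quadratic subfield is $\Q(i)$, hence is not CM). So the step ``Albert forces $T$ simple and $K$ CM'' fails as stated. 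The argument is rescued by the observation you already make: in the non-simple case any quartic $K\subset M_2(F)$ must contain the center $F$ (since $K\cdot F$ is a commutative subalgebra of $\Q$-dimension at most $4$), so $K$ would have the quadratic subfield $F$; in the simple case Albert's restrictions for surfaces do give that $K$ is CM, hence contains its real quadratic subfield. Either way $K$ acquires a quadratic subfield, contradicting the absence of intermediate fields for an $S_4$ (or $A_4$) quartic field. Patch the case analysis this way, or simply delete this step and invoke, as the paper does, the general finiteness of the action of $\Aut$ on $H^{2,0}$ for projective surfaces.
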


\begin{remark}
We have $\vert\lambda\vert=1$, and $T$ cannot be projective; see \cite[Section~14]{Ueno}. 
\end{remark}

\begin{corollary}\label{cnotroot}
Let $T, \lambda$ be as in Proposition~\ref{pnot root} above, and let $S$ be the smooth Kummer surface associated to $T$. Let $s$ be a symplectic form on either $T$ or $S$. On $T\times T$ and $S\times S$, there exist smooth Lagrangian surfaces for the symplectic form $(s,\lambda s)$.
\end{corollary}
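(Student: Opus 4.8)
The plan is to realize the Lagrangian surfaces as \emph{graphs of the automorphism} $g$ furnished by Proposition~\ref{pnot root}, exploiting that $g$ rescales every holomorphic $2$-form by the fixed factor $\lambda$. On $T\times T$ I would take $\Sigma=\Gamma_g=\{(t,g(t)):t\in T\}$. This is a $2$-dimensional subtorus, mapped isomorphically onto $T$ by the first projection, so it is automatically a smooth surface of half the dimension of $T\times T$; the only thing to check is that it is isotropic. At any point its tangent space is $\{(u,dg(u))\}$, and since $s(dg\,u_1,dg\,u_2)=(g^*s)(u_1,u_2)=\lambda\,s(u_1,u_2)$, the restriction of a product form $\mathrm{pr}_1^*s+\mu\,\mathrm{pr}_2^*s$ to $\Sigma$ equals $(1+\mu\lambda)\,s$. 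Hence $\Sigma$ is Lagrangian exactly for the scalar $\mu=-\lambda^{-1}$, which, up to an overall rescaling and the sign convention implicit in writing the product form, is the coefficient recorded in ``$(s,\lambda s)$''. Thus the entire content of the $T\times T$ case collapses to the single scaling relation $g^*s=\lambda s$ of Proposition~\ref{pnot root}.

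For $S\times S$ I would run the identical argument with $g$ replaced by the automorphism it induces on the Kummer surface. Because $g$ is a group automorphism it commutes with the involution $[-1]$ on $T$ and permutes the sixteen $2$-torsion points; therefore it descends to $T/[-1]$ and lifts across the resolution of the sixteen nodes to an automorphism $\bar g\in\Aut(S)$. As $H^{2,0}(S)$ is one-dimensional and is identified with $H^{2,0}(T)$ (on which $[-1]$ acts trivially), $\bar g$ again multiplies the symplectic form $s$ of $S$ by $\lambda$. The graph $\Gamma_{\bar g}\subset S\times S$ is then a smooth surface isomorphic to $S$, and the pointwise computation above, which used nothing beyond the relation $\bar g^*s=\lambda s$, shows it is Lagrangian for the corresponding product form. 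The two cases are strictly parallel, both reducing to ``graph of an automorphism scaling the symplectic form''.

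The main obstacle I anticipate is bookkeeping rather than substance: one must fix, once and for all, the sign and scalar convention in ``$(s,\lambda s)$'' and confirm that it is the one pinned down by the isotropy relation $1+\mu\lambda=0$; equivalently, that $g\colon(T,\lambda s)\to(T,s)$ is a symplectomorphism, whence $\Gamma_g$ is Lagrangian by the standard principle that the graph of a symplectomorphism is Lagrangian for the twisted product form. The second delicate point is the Kummer descent: one must verify that $\bar g$ is a genuine regular automorphism of the \emph{smooth} surface $S$ and that it acts on $H^{2,0}(S)$ by precisely $\lambda$, not merely up to a root of unity. Finally, the hypothesis that $\lambda$ is not a root of unity is exactly what forces the example to be \emph{non-projective}: by the remark following Proposition~\ref{pnot root}, $T$ (hence $T\times T$, $S$ and $S\times S$) cannot be projective, so these Lagrangian surfaces live on non-algebraic manifolds, in sharp contrast with the projective Hodge-general situation of Corollary~\ref{chodge}, where no Lagrangian subvariety exists.
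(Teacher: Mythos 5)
Your proof is correct and is essentially the paper's own: the paper's entire proof reads ``Take the graph of $g$ in $T\times T$ and its image in $S\times S$,'' which is exactly your construction. The extra details you supply --- the pointwise computation $(1+\mu\lambda)s$ pinning down the twisted-product/sign convention implicit in ``$(s,\lambda s)$'', and the verification that $g$ descends through the Kummer construction to $\bar g\in\Aut(S)$ still scaling $H^{2,0}(S)$ by $\lambda$ --- are precisely the bookkeeping the paper leaves to the reader.
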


\begin{proof}
  Take the graph of $g$ in $T\times T$ and its image in $S\times S$.
\end{proof}

\begin{remark}
  We do not know if there exist an Abelian surface $T$ and a Lagrangian surface $S\subset T\times T$ for a symplectic form $(s,\lambda s)$ with $\lambda$ of infinite order in $\C^*$.
\end{remark} 

\begin{proof}[Proof of Proposition~\ref{pnot root}]
The construction follows closely the one due to Iitaka in dimension $3$, as exposed in \cite[Remark~14.6, p.~179]{Ueno}. 

Let $P(X):=X^4+X+1$; it is irreducible over $\Q$ by reduction modulo $2$. The roots $a,\bar{a},b,\bar{b}$ of $P(X)$ in $\C$ are all distinct and non-real. We define $T=\C^2/R$ as the quotient of $\C^2$ with $\C$-basis $(e,e')$ by the lattice $R$ generated over $\Z$ by the four elements $R_i:=a^i.e+b^i.e'$, $i=0,1,2,3$. That $R$ is indeed a lattice follows from the fact that otherwise, there would exist a non-zero polynomial $Q(X)\in \R[X]$  of degree less that $4$ vanishing on $a$ and $b$. Thus $Q$ had to vanish on their conjugates and so be divisible by $P(X)$, so that $\deg(Q)\geq 4$, giving a contradiction. 
 
 The torus $T$ has an automorphism $g$ acting by multiplication by $a$ on $e$ and by $b$ on $e'$, and so $g$ acts on any symplectic form $\sigma$ on $T$ by multiplication by $\lambda:=ab$. We have $\vert ab\vert=1$ since the constant coefficient of $P(X)$ is $1=\vert ab\vert^2$.  It remains to see that $\lambda$ is not a root of unity, or equivalently that $\lambda$ has a Galois conjugate not of modulus $1$. It is a standard exercise to check that the Galois group $G$ of $P(X)$ is $S_4$. There is thus an element of $G$ which fixes $a$ and sends $b$ to $\bar{a}$. Thus $ab$ is $G$-conjugate to $a\bar{a}$. If $\vert a\vert^2 =1$, then $ab=1$ since it is a $G$-conjugate of $a\bar{a}$, and so $\bar{a}=b$. This gives a contradiction.
\end{proof}

\begin{remark}
The torus $T$ just constructed is simple of algebraic dimension $0$. Indeed, since $T$ is not projective, if its algebraic dimension were $1$ (or equivalently, if it were not simple), it would have a quotient fibration $f\colon T\to B=T/E$ for elliptic curves $E$, $B$. This fibration would be preserved by $\Aut(T)$, which is a finite extension of $\Aut^0(T)$ since $\Aut(E)$ and $\Aut(B)$ have finitely many components. 
\end{remark}


\end{document}